\numberwithin{equation}{section}
\newtheorem{theorem}{Theorem}[section]
\newtheorem{lemma}{Lemma}[section]
\newtheorem{corollary}{Corollary}[section]
\newtheorem{remark}{Remark}[section]
\newtheorem{definition}{Definition}[section]
\theoremstyle{definition}
\newtheorem{example}{Example}
\begin{document}
\bibliographystyle{amsplain}
\title{{{
Extremal mild solutions for Hilfer fractional evolution equation with mixed monotone Impulsive conditions}}}
\author{Divya Raghavan*
}
\address{
Department of Mathematics,
Indian Institute of Technology Roorkee, Roorkee-247667,
Uttarakhand, India
}
\email{divyar@ma.iitr.ac.in,madhanginathan@gmail.com}
\author{
N. Sukavanam
}
\address{
Department of  Mathematics  \\
Indian Institute of Technology Roorkee,  Roorkee-247667,
Uttarakhand,  India
}
\email{n.sukavanam@ma.iitr.ac.in}
\bigskip
\begin{abstract}
The well established mixed monotone iterative technique that is used to study the existence and uniqueness of fractional order system is studied explicitly for impulsive system with Hilfer fractional order in this paper. The procedure of finding mild $L$-quasi solution of such impulsive evolution equation with noncomapct semigroups involves measure of non-compactness and Sadovskii's fixed point theorem as well. An example is provided to illustrate the main results.
\end{abstract}
\subjclass[2010]{26A33; 34K30; 34K45; 47D06}
\keywords{Lower and upper solution; Impulsive system; Hilfer fractional derivative; non-compact measure}
\maketitle
\pagestyle{myheadings}
\markboth
{Divya Raghavan and N. Sukavanam}
{Extremal mild solutions for Hilfer fractional evolution equation with mixed monotone Impulsive conditions}
\section{Literature Motivation}
\label{Intro}
Over the years, the basic study of differential equation mainly leads to the finding of its extremal solution. Conditions on when a differential system will have a unique solution is always a challenge for the researchers. Various methods are used to investigate the uniqueness of solution for the desired differential system. Successive approximation or the iterative technique are the typical methods used to determine such unique solution. Du and Lakshmikanthan\cite{Monotone-first-paper} constructed an iterative procedure for an initial value problem,
\begin{align*}
x^{'}=g(t,x); \enspace x(0)=x_{0},
\end{align*}
where the unique solution was guaranteed by upper and lower solution in a closed set. Thereafter, many articles emerged in this direction based on their paper. The case when the order is preserved with respect to the image of the function, then it is monotonicity property. Whereas, the case leading to the decomposition of the function into monotonically decreasing function and monotonically increasing function guided Guo and Lakshimkantham \cite{Mixed-first} to introduce the concept of coupled fixed point. Their work focussed on the existence criteria for both continuous and discontinuous operators defined as $A:D\times D\rightarrow E$ where $D$ is the subset of the Banach space $E$ which is partially ordered by a cone $N$ (details regarding cone can be referred to \cite{Mixed-Cone} by Guo and Lakshmikantham) and $A(x,y)$ is non-decreasing in $x$ and non-increasing in $y$ which were termed as mixed monotone. Mixed monotonicity property finds itself useful mainly in the convergence analysis, global stability analysis, qualitative analysis etc. In continuation, Guo \cite{Mixed-fixed} investigated the existence and uniqueness of mixed monotone operator for a general case where the operator need not be continuous. Chang and Ma \cite{Mixed-Chang1} studied the existence of coupled fixed points of set valued operators defined as $A:D\times D\rightarrow 2^{E}$. As an application, the authors considered the functional equation
\begin{align}
\label{eqn:functional}
g(x)=\sup_{y\in D}[f(x,y)+F(x,y,g(T(x,y)))],\enspace x\in S,
\end{align}
with $S$ the state space, $D$ the decision space, $R=(-\infty,\infty)$, $T:S\times D\rightarrow S$, $f:S\times D\rightarrow R$ and
$F:S\times D\times R\rightarrow R$ emerged in dynamic programming. Similarly Sun and Liu \cite{Mixed-Iterative} improved the existing results on conditions on operator, where their conditions don't require the operator to be continuous as well as the cone $N$ to be normal. The authors implemented their conditions to nonlinear Hammerstein integral equation given by
\begin{align*}
g(x)=\int_{G}k(x,y)f(y,g(y))dy,
\end{align*}
with $G$ the bounded closed subset of $\mathbb{R}^{n}$, $k(x,y):G\times G \rightarrow \mathbb{R}^{1}$ an non negative operator and $f(y,g(y))=f_{1}(y,g(y))+f_{2}(y,g(y))$ with $f_{1}(y,g(y))$ is non-decreasing in $x$ and $f_{2}(y,g(y))$ is non-increasing in $y$. Mean while Chang and Guo \cite{Mixed-Chang2} studied the existence and uniqueness for multiple mixed monotone operators. These multiple mixed monotone operators finds application for example, in system of functional equation in dynamic programming of multistage decision process given as
\begin{align*}
\left\{
  \begin{array}{ll}
  \displaystyle f(x)=\sup_{y\in D}[\phi(x,y)+G(x,y,g(T(x,y)))],\\
   \displaystyle g(x)=\sup_{y\in D}[\phi(x,y)+F(x,y,f(T(x,y)))],
      \end{array}
\right.
\end{align*}
where the operators are defined as in \eqref{eqn:functional}. Zhang \cite{Mixed-Concave1} analysed the case when the operator is convex in nature. To be more precise, Zhang considered the operator, say $A$, satisfy
$$A(tx+(1-t)y)\leq tAx+(1-t)Ay,
$$
 for $x,y\in D(A)$ with $x\leq y$ and $t\in [0,1]$. It is to be noted that $A$ is concave if $-A$ is convex. The author gave the existence and uniqueness of fixed points of such convex-concave mixed monotone operators. Application to differential equations in Banach spaces and to nonlinear integral equations in an unbounded region are given by Zhang to assert the theory developed. A general method of finding the fixed points of such $\phi$ concave-$(-\psi)$ convex operators was not developed until Xu and Jia \cite{Mixed-concave-convex} gave a typical approach for such mixed monotone operators and gave a definition for the operator $A$ to be $\phi$ concave-($-\psi$)convex. As an application, the authors studied a nonlinear integral equation in bounded/unbounded region. Soon the mixed monotone operators found themselves effective in impulsive system as well. Chen \cite{Mixed-imp-BS} employed mixed monotone iterative technique to deal with the existence of solution of the impulsive periodic boundary value problem in a Banach space $E$ given below with the help of coupled upper and lower $L$-quasi solutions.
\begin{align*}
\left\{
  \begin{array}{ll}
x^{'}(t)=g(t,x(t),x(t)), \enspace t\in [0,\omega], \enspace t\neq t_{k},\\
\Delta x|_{t=t_{k}}=J_{k}(x(t_{k}),x(t_{k})), \enspace k=1,2,\ldots,l,\\
x(0)=x(\omega).
      \end{array}
\right.
\end{align*}

 Here, $g\in C([0,\omega]\times E\times E,E)$; $0<t_{1}<t_{2}<\ldots<t_{l}<\omega$; $J_{k}\in C(E\times E,E)$ an impulsive function; $\Delta x|_{t=t_{k}}$ is the jump at $t=t_{k}$. Chen and Li\cite{Mixed-semi-BS} studied the existence of mild $L$-quasi solutions for IVP. Their results enhanced some related theory in ordinary differential equation and partial differential equation and generalised many previous results in this direction.

Fractional order differential equation are more practical oriented, which can be realised with the increase in the number of the research articles with real time applications. It is natural to study the existence of solution using mixed monotone operators for fractional order system. Chen and Li \cite{Mixed-frac-nonlocal} considered a fractional nonlocal evolution system of the form
\begin{align}
\label{eqn:NPFEE}
\left\{
  \begin{array}{ll}
{}^{C}D^{q}_{t}x(t)+Ax(t)=g(t,x(t),x(t)), \enspace t\in J=[0,\omega],\\
x(0)=f(x,x),
      \end{array}
\right.
\end{align}
where ${}^{C}D^{q}_{t}$ is the Caputo derivative of order $q\in (0,1)$; $A:D(A)\subset E \rightarrow E$ a closed linear operator such that $-A$ generates $Q(t)$ a uniformly bounded $C_{0}$-semigroup; $g\in C(J\times E\times E,E)$ and $f(x,x)$ is a nonlocal function. Using operator semigroup theory, theory of mixed monotone operator and some perturbation method, the authors obtained the coupled minimal and maximal mild $L$-quasi solutions of \eqref{eqn:NPFEE}. It can be noted that, when $L\equiv 0$, then the coupled minimal and maximal mild $L$-quasi solutions are equivalent to minimal and maximal mild quasi solutions of \eqref{eqn:NPFEE}. For semilinear evolution equation with impulsive conditions, Li and Gou \cite{Mixed-Semi-Imp} studied the existence and mild $L$-quasi solutions of the periodic boundary value problem given by
\begin{align*}
\left\{
  \begin{array}{ll}
{}^{C}D^{q}_{t}x(t)=g(t,x(t),x(t)), \enspace t\in [0,\omega], \enspace t\neq t_{k},\\
\Delta x|_{t=t_{k}}=J_{k}(x(t_{k}),x(t_{k})), \enspace k=1,2,\ldots,l,\\
x(0)=x(\omega),
      \end{array}
\right.
\end{align*}
where the operator, derivative and the function $g$ are defined as in \eqref{eqn:NPFEE}; $J_{k}$ an impulsive function; $0=t_{0}<t_{1}<\ldots<t_{l}<\omega$; $\Delta x|_{t=t_{k}}= x(t_{k}^{+})-x(t_{k}^{-})$, where $x(t_{k}^{+})$ and $x(t_{k}^{-})$ corresponds to right and left limit of $x(t)$ at $t=t_{k}$. Another notable work is the work of Zhao and Wang \cite{Mixed-impulsive-Zhang} on the mixed monotone technique for fractional impulsive system of Caputo order.

The existence of solution of Hilfer fractional derivative which acts as a interim between the two classical fractional derivative Caputo and Riemann-Liouville derivative which was outlined by Hilfer \cite{Hilfer-book} was examined by Furati et al. \cite{Hilfer-exist-1}. Consequently, Guo and Li \cite{Monotone-frac-Hilfer} utilized fixed point theorem together with monotone iterative technique to study the existence of extremal solution of Hilfer fractional nonlocal evolution equation. Recently, Guo et al. investigated the existence of mild $L$-quasi-solutions using fixed point theorem along with mixed monotone iterative technique of nonlocal Hilfer fractional evolution equation defined as
\begin{align*}
\left\{
  \begin{array}{ll}
D^{p,q}_{0+}x(t)+Ax(t)=g(t,x(t),G(x(t))), \enspace t\in (0,\omega],\\
I_{0+}^{(1-p)(1-q)}x(0)=x_{0}+\sum_{k=1}^{l}\lambda_{k}x(\tau_{k}), \enspace \tau_{k}\in (0,\omega],
\end{array}
\right.
\end{align*}
where $D^{p,q}_{0+}x(t)$ denotes the Hilfer fractional derivative of order $q$ and type $p$ with $\frac{1}{2}<q<1$ and $0\leq p\leq 1$; the operator $G$ is defined as $Gx(t)=\int_{0}^{t}K(t,s)x(s)ds$; $\tau_{k}$ are prefixed points with $k=1,2,\ldots,l$ satisfying $0\leq \tau_{1}\leq\tau_{2}\leq \ldots \leq\tau_{l}<\omega$; $\lambda_{k}$ are real numbers. But many physical problems are impulsive in nature, it is ideal to study an impulsive system. It is challenging to work on the existence of solution of impulsive fractional system with Hilfer fractional order. One can refer for the articles on approximate controllability of impulsive system with Hilfer fractional derivative by Debbouche and Antonov \cite{Hilfer-impulsive-inclusion}, by  Ahmed et al. \cite{Hilfer-impulsive-Ahmed}and by Du et al. \cite{Hilfer-impulsive-Du}. Hence it is worth to study the existence of coupled mild $L$-quasi solution of Hilfer fractional impulsive system as below
\begin{align}
\label{eqn:Mixed-impul-Hilfer}
\left\{
  \begin{array}{ll}
   D_{0+}^{\mu,\nu}x(t)+Ax(t)= g(t,x(t),x(t)), \enspace t \in J=[0,T], \enspace t\neq t_{k},\\
   \Delta I_{t_{k}}^{(1-\lambda)}x(t_{k})=\phi_{k}(x(t_{k}),x(t_{k})),\enspace k=1,2,\ldots l,\\
    I_{0+}^{(1-\lambda)}[x(0)]= x_{0},
      \end{array}
\right.
\end{align}
\noindent where $D_{0+}^{\mu,\nu}$ denotes the Hilfer fractional derivative of order $0<\mu<1$ of type $0\leq \nu \leq1$ and
$\lambda=\mu+\nu-\mu \nu$; $A:D(A)\subseteq E\rightarrow E$  is a closed linear operator and $-A$ generates a $C_{0}$-semigroup $Q(t)(t\geq 0)$ on a
Banach space $E$. Let the impulse effect takes place at $t=t_{k}$, for $(k=1,2,\ldots,l)$; $\phi_{k}\in C(E\times E,E)$ determines the size of the jump at time $t_{k}$. In other words, the impulsive moments meet the relation $\Delta I_{t_{k}}^{1-\lambda}x(t_{k})=I_{t_{k}^{+}}^{1-\lambda}x(t_{k}^{+})-I_{t_{k}^{-}}^{1-\lambda}x(t_{k}^{-})$,
where $I_{t_{k}^{+}}^{1-\lambda}x(t_{k}^{+})$ and $I_{t_{k}^{-}}^{1-\lambda}x(t_{k}^{-})$ denotes the right and the left limit of
$I_{t_{k}}^{1-\lambda}x(t)$ at $t=t_{k}$ with  $0=t_{0}<t_{1}\ldots<t_{l}<t_{l+1}=T$; $g\in C(J\times E \times E,E)$; $x_{0}\in E$.

The rest of the paper is organised in the following way. Section 2 includes essential definitions and lemma for the main results, while Section 3 encloses the main result under appropriate assumptions. Finally Section 4 possess an example to ascertain the main results.
\section{Essential notions}
\label{Preliminaries}
This section includes some basic results, definitions and lemmas that are relevant to this paper.
\begin{definition}{\rm{\cite{Mixed-Cone}}}
Let $E$ be a real Banach space. A nonempty convex closed set $N\subset E$ is called a cone if it satisfies the following two conditions:
\begin{enumerate}
\item
$x\in N$, $\eta\geq \theta$ $\Rightarrow$ $\eta x\in N$.
\item
$x\in N$, $-x \in N$ $\Rightarrow$ $x=\theta$, where $\theta$ denotes the zero element of $E$.
\end{enumerate}
Every cone $N$ in $E$ defines a partial ordering in $E$ given by $x\leq y$ $\iff$ $y-x\in N$.
\end{definition}
\begin{definition}{\rm{\cite{Mixed-Cone}}}
A cone $N$ is said to be normal if there exists a positive constant $\tilde{N}$ such that $\forall$, $x,y\in N$,
\begin{align*}
\theta\leq x\leq y \enspace \Rightarrow \|x\|\leq \tilde{N}\|y\|.
\end{align*}
\end{definition}
It is to be noted that a normal cone is always convex and a cone is said to be positive if for $x,y\in N$, $y-x\in N$ $\forall$ $x<y$.
\begin{definition}{\rm{\cite{Mixed-fixed}}}
An operator $A:D\times D\rightarrow E$ with $D\subset E$ is said to be mixed monotone if $A(x,y)$ is non-decreasing in $x$ and non-increasing in $y$, that is for $x_{1},x_{2},y \in D$, if $x_{1}\leq x_{2}$ then, $A(x_{1},y)\leq A(x_{2},y)$. Similarly, for $y_{1},y_{2},x \in D$, if $y_{1}\leq y_{2}$ then, $A(x,y_{1})\geq A(x,y_{2})$. Also a point $(\hat{x},\hat{y}) \in D\times D$ is called a coupled fixed point of $A$ if $A(\hat{x},\hat{y})=\hat{x}$ and $A(\hat{y},\hat{x})=\hat{y}$ and $\hat{x}$ is the fixed point of $A$ if $A(\hat{x},\hat{x})=\hat{x}$.
\end{definition}

Let $C(J,E)$ denote the space of all $E$-valued continuous function from $J$ to $E$ which is an ordered Banach space generated by the positive cone $N=\{x\in E|x(t)\geq \theta\}$. Also let $C_{1-\lambda}(J,E)$ be defined as $C_{1-\lambda}(J,E)=\{x:t^{1-\lambda}x(t)\in C(J,E)\}$. Clearly $C_{1-\lambda}(J,E)$ is an ordered Banach space induced by the positive cone $N^{'}=\{x\in C_{1-\lambda}(J,E)|x(t)\geq \theta, t\in J\}$. Here $N$ and $N^{'}$ both are normal with the same normal constant $\tilde{N}$. Let $PC(J,E)$ be an ordered Banach space defined as $PC(J,E)=\{x:J\rightarrow E, x(t)\enspace \mbox{is continous at}\enspace t\neq t_{k} \enspace \mbox{and} \enspace x(t_{k}^{+}) \enspace \mbox{exists},\enspace k=1,2,\ldots,l\}$, with the norm $\|x\|_{PC}=\sup\{\|x(t)\|:t\in J\}$. As an impulsive system is considered, a piecewise continuous Banach space should be defined. Let $PC_{1-\lambda}(J,E)=\{x:(t-t_{k})^{1-\lambda}x(t) \in C((t_{k},t_{k+1}],E)$ and $\displaystyle \lim_{t\rightarrow t_{k+}}(t-t_{k})^{1-\lambda}x(t)$, $k=1,2,\ldots l$ exists with the norm
\begin{align*}
\|x(t)\|_{PC_{1-\lambda}}=\max\{\sup_{t\in(t_{k},t_{k+1}]}(t-t_{k})^{1-\lambda}\|x(t)\|:\enspace k=0,1,\ldots,l\}.
\end{align*}

The fractional integral of order $\mu$ for an integrable function $g$ is given as \cite{Podlubny-book},
\begin{align*}
I^{\mu}_{t}g(t)=\frac{1}{\Gamma(\mu)}\int^{t}_{0}(t-s)^{\mu-1}g(s)ds, \enspace \enspace 0< \mu <1.
\end{align*}
 Here $\Gamma(\cdot)$ is the gamma function. Also, the fractional derivative of Caputo and
 Riemann-Liouville of order $\mu$, respectively are given by \cite{Podlubny-book},
\begin{align*}
^{C}D^{\mu}_{0+}g(t)=\frac{1}{\Gamma(1-\mu)}\int^{t}_{0}\frac{g'(s)}{(t-s)^{\mu}}ds, \enspace t>0,\enspace 0< \mu <1,
\end{align*}
and
\begin{align*}
^{L}D^{\mu}_{0+}g(t)=\frac{1}{\Gamma(1-\mu)}\left(\frac{d}{dt}\right)\int^{t}_{0}\frac{g(s)}{(t-s)^{\mu}}ds,\enspace t>0,\enspace 0< \mu <1.
\end{align*}
The Hilfer fractional derivative of order $0< \mu <1$ and type $0\leq \nu \leq 1$ of function $g(t)$ is given as
\begin{align*}
D^{\mu,\nu}_{0+}g(t)=I_{0+}^{\nu(1-\mu)}DI_{0+}^{(1-\nu)(1-\mu)}g(t),
\end{align*}
where $D:=\frac{d}{dt}$.  Gu and Trujillo \cite{Hilfer-remark} can be referred for more details on Hilfer fractional derivative. Moreover, Riemann-Liouville and Caputo can be regarded as a special case of Hilfer fractional derivative, respectively as
\begin{align*}
D_{0+}^{\mu,\nu}=
\left\{
  \begin{array}{ll}
   DI_{0+}^{1-\mu}={ }^{L}D_{0+}^{\mu},\enspace \nu=0\\
   I_{0+}^{1-\mu}D={ }^{C}D^{\mu}_{0+},\enspace \nu=1.
  \end{array}
\right.
\end{align*}
The parameter $\lambda$ satisfies $\lambda=\mu+\nu-\mu \nu, \enspace 0<\lambda\leq 1$.
\begin{definition}{\rm{\cite{Mixed-impulsive-Zhang}}}
An operator family $Q(t):E\rightarrow E$ for $t\geq 0$ is supposedly positive if, for any $u\geq N$, the inequality $Q(t)u\geq \theta$ holds.
\end{definition}

It can be referred \cite{Monotone-non-compact} that the Kuratowski measure of non-compactness measure denoted by $\alpha(\cdot)$ is defined on a bounded set. For any $t\in J$ and $B \subset C(J,E)$, define $B (t)=\{x(t):x\in B\}$. If $B$ is bounded in $C(J,E)$, then $B$ is bounded in $E$. Also, $\alpha(B(t))\leq \alpha(B)$.

The following Lemmas are necessary for the proof of the main theorem in the next section.
\begin{lemma}{\rm{\cite{Mixed-Hilfer-Nonlocal}}}
\label{lem:Mixed lem-1}
Let $E$ be a Banach space and let $D\subset E$ be bounded. Then there exists a countable set $D_{0}\subset D$ such that $\alpha(D)\leq 2\alpha(D_{0})$.
\end{lemma}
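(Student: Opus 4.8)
The plan is to build the countable set $D_{0}$ by a greedy ball-packing construction inside $D$. Write $d:=\alpha(D)$, which is finite because $D$ is bounded. If $d=0$ there is nothing to prove: any nonempty countable $D_{0}\subset D$ (for instance a single point) satisfies $\alpha(D)=0\le 2\alpha(D_{0})$. So assume $d>0$; then $D$ is infinite, and the fact that drives everything is that, by the definition of the Kuratowski measure, $D$ \emph{cannot} be covered by finitely many subsets of diameter strictly less than $d$.

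First I would construct a sequence $(x_{n})_{n\ge 1}$ in $D$ recursively. Pick $x_{1}\in D$ arbitrarily. Having chosen $x_{1},\dots,x_{n}$, set $r_{n}=\tfrac12\bigl(d-\tfrac1n\bigr)$; if $r_{n}\le 0$ simply take $x_{n+1}\in D$ arbitrary, and otherwise note that each closed ball $\overline{B}(x_{i},r_{n})$ has diameter at most $2r_{n}=d-\tfrac1n<d$, so $\bigcup_{i=1}^{n}\overline{B}(x_{i},r_{n})$ fails to cover $D$, and we may choose $x_{n+1}\in D\setminus\bigcup_{i=1}^{n}\overline{B}(x_{i},r_{n})$ (in particular the $x_{n}$ with $n>1/d$ are pairwise distinct). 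By construction, for every pair of indices $i<j$ one has $\|x_{i}-x_{j}\|>r_{j-1}=\tfrac12\bigl(d-\tfrac1{j-1}\bigr)$. Put $D_{0}=\{x_{n}:n\ge 1\}$, a countable (indeed infinite) subset of $D$.

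Next I would bound $\alpha(D_{0})$ from below. Fix any $\delta$ with $0<\delta<d/2$, and choose $N$ so large that $r_{j-1}>\delta$ for all $j\ge N$; then any two distinct points of the tail $\{x_{j}:j\ge N\}$ lie more than $\delta$ apart. Hence every set of diameter at most $\delta$ contains at most one point of this infinite tail, so $D_{0}$ cannot be covered by finitely many sets of diameter $\le\delta$; therefore $\alpha(D_{0})\ge\delta$. Letting $\delta\uparrow d/2$ gives $\alpha(D_{0})\ge d/2$, and combining yields $\alpha(D)=d\le 2\alpha(D_{0})$, as required.

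The only delicate point — and the reason the constant is $2$ and not $1$ — is the bookkeeping in the recursion: $\alpha(D)=d$ forbids covers of $D$ only by finitely many sets of diameter \emph{strictly} less than $d$, so balls of radius exactly $d/2$ (whose diameter may be as large as $d$) are borderline, which forces the shrinkage to $r_{n}=\tfrac12(d-\tfrac1n)$ and caps the separation of the packed points at $d/2$. I would also stress that one cannot shortcut the argument by taking a countable dense subset of $D$, since a bounded subset of a general Banach space need not be separable; the packing construction is precisely what circumvents this.
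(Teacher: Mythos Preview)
Your argument is correct: the greedy packing yields a countable $D_{0}\subset D$ whose tail is $\delta$-separated for every $\delta<d/2$, which forces $\alpha(D_{0})\ge d/2$ and hence $\alpha(D)\le 2\alpha(D_{0})$. The only cosmetic blemish is the blanket claim that $\|x_{i}-x_{j}\|>r_{j-1}$ for \emph{all} $i<j$; in the degenerate step where $r_{j-1}=0$ and $x_{j}$ is chosen arbitrarily this can fail, but since the lower bound on $\alpha(D_{0})$ uses only indices $j\ge N$ with $r_{j-1}>\delta>0$, the conclusion is unaffected.

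As for comparison with the paper: there is nothing to compare. The paper does not prove this lemma at all --- it is quoted verbatim from \cite{Mixed-Hilfer-Nonlocal} as a preliminary tool, with no argument supplied. Your self-contained packing proof is the standard route to this inequality (it is essentially the argument behind the well-known fact that the Kuratowski and Hausdorff measures of noncompactness are equivalent up to a factor of~$2$), and it would serve perfectly well as the missing justification.
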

\begin{lemma}{\rm{\cite{Monotone-non-compact}}}
\label{lem:Mixed-cgt}
Let $B_{p}=\{x_{p}\}\subset C(J,E),\enspace (p=1,2,\ldots)$ be a bounded and countable set. Then $\alpha(B_{p}(t))$ is Lebesgue integral on $J$, and
$$
\alpha\left(\Big\{\int_{J}x_{p}(t)dt|_{p=1,2,\ldots,}\Big\}\right)\leq 2\int_{J}\alpha(B_{p}(t))dt.
$$
\end{lemma}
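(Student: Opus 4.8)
The plan is to make two harmless reductions, prove the inequality first for the Hausdorff measure of noncompactness $\chi$ (where it holds with constant $1$), and then recover the stated bound from the comparison $\chi(S)\le\alpha(S)\le 2\chi(S)$; the factor $2$ in the conclusion is precisely the cost of this last step. Throughout write $B(t):=\{x_p(t):p\ge1\}$ for the slice denoted $B_p(t)$ in the statement and $M:=\sup_{p,\,t}\|x_p(t)\|<\infty$, which is finite since $\{x_p\}$ is bounded in $C(J,E)$.

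First I would reduce to a separable Banach space. Each $x_p$ is continuous on the compact interval $J$, so each $x_p(J)$ is compact, and the closed linear span $E_0$ of $\bigcup_{p\ge1}x_p(J)$ is a separable closed subspace of $E$ containing every $x_p(t)$ and every $\int_J x_p(t)\,dt$ (the latter being a limit of Riemann sums). A finite cover of a subset of $E_0$ by sets of diameter $\le d$ may be intersected with $E_0$ without increasing diameters, so $\alpha(\cdot)$ takes the same value whether computed in $E_0$ or in $E$; hence we may assume $E$ separable. Fixing a countable dense set $\{y_j\}\subset E$, the map $t\mapsto\chi(B(t))=\inf_{F}\sup_{p}\operatorname{dist}(x_p(t),F)$, with $F$ running over the countably many finite subsets of $\{y_j\}$, is a countable infimum of countable suprema of continuous functions of $t$, hence measurable; a similar argument — now expressing $\{t:\alpha(B(t))\le d\}$ through finite colourings of the graph on $\mathbb{N}$ that joins $p$ to $q$ whenever $\|x_p(t)-x_q(t)\|>d$ — shows $t\mapsto\alpha(B(t))$ is measurable. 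Both maps are bounded by $2M$, hence Lebesgue integrable, which settles the first assertion.

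For the inequality I would first treat finite-valued data. If $\{J_1,\dots,J_N\}$ is a finite measurable partition of $J$ and each $x_p$ is replaced by a function equal to the constant $c_i^{(p)}$ on $J_i$, then its integral over $J$ equals $\sum_{i=1}^N|J_i|c_i^{(p)}$, so the set of these integrals lies in the Minkowski combination $\sum_{i=1}^N|J_i|\{c_i^{(q)}:q\ge1\}$; subadditivity and positive homogeneity of $\chi$ then give $\chi(\{\sum_i|J_i|c_i^{(p)}:p\ge1\})\le\sum_i|J_i|\chi(\{c_i^{(q)}:q\})=\int_J\chi(B_{\mathrm{step}}(t))\,dt$, with no constant. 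To pass to the continuous $x_p$, fix $\varepsilon>0$; using density of simple functions in $L^1(J,E)$ choose, for each $p$, a simple function $y_p$ with $\|y_p-x_p\|_{L^1(J,E)}\le\varepsilon 2^{-p}$ and $\|y_p(t)\|\le M+1$, and pass to the (countable) common refinement of the finite partitions underlying the $y_p$; truncating the resulting countable Minkowski sum and bounding its tail by $M+1$ yields $\chi(\{\int_J y_p\,dt:p\ge1\})\le\int_J\chi(B_y(t))\,dt$, where $B_y(t):=\{y_p(t):p\ge1\}$. Since $\chi$ is $1$-Lipschitz for the Hausdorff distance and the relevant sets are indexed compatibly in $p$, one gets $|\chi(\{\int_J x_p\,dt\})-\chi(\{\int_J y_p\,dt\})|\le\sup_p\|x_p-y_p\|_{L^1}\le\varepsilon$ and, after integrating, $|\int_J\chi(B(t))\,dt-\int_J\chi(B_y(t))\,dt|\le\int_J\sup_p\|x_p(t)-y_p(t)\|\,dt\le\sum_p\varepsilon 2^{-p}=\varepsilon$. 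Combining these bounds with the simple-function estimate and letting $\varepsilon\to0$ gives $\chi(\{\int_J x_p(t)\,dt:p\ge1\})\le\int_J\chi(B(t))\,dt$, whence $\alpha(\{\int_J x_p\,dt:p\ge1\})\le 2\chi(\{\int_J x_p\,dt:p\ge1\})\le 2\int_J\chi(B(t))\,dt\le 2\int_J\alpha(B(t))\,dt$, which is the claim.

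I expect the main obstacle to be the absence of any control that is \emph{uniform in $p$}: the simple-function (or Riemann-sum) approximations of $\int_J x_p(t)\,dt$ converge for each fixed $p$ but not uniformly in $p$, so one cannot simply pass to the limit in the finite-data estimate and must instead run the summable diagonal approximation $\|y_p-x_p\|_{L^1}\le\varepsilon 2^{-p}$ together with the Hausdorff-metric continuity of the measure of noncompactness — and this is exactly the point at which the detour through $\chi$, hence the factor $2$, becomes unavoidable. A secondary technical nuisance is the measurability of $t\mapsto\alpha(B(t))$, for which the reduction to a separable space is essential.
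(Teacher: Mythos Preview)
The paper does not prove this lemma; it is quoted verbatim from Heinz~\cite{Monotone-non-compact} and used as a black box, so there is no in-paper proof to compare your attempt against.

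For what it is worth, your route---reduce to a separable ambient space, prove the sharp estimate with constant~$1$ for the Hausdorff (ball) measure $\chi$ via a simple-function/Minkowski-sum argument, and then invoke $\alpha\le 2\chi\le 2\alpha$ to recover the factor~$2$---is essentially the strategy of Heinz's original paper and is sound in outline. One passage that deserves more care is the measurability of $t\mapsto\alpha(B(t))$: the phrase ``finite colourings of the graph'' hides a compactness step. The point is that $\alpha(B(t))\le d$ holds iff there exists a single $n$ such that \emph{every} finite block $\{x_1(t),\dots,x_N(t)\}$ admits a partition into at most $n$ pieces of diameter $\le d$ (the passage from ``all finite $N$'' back to ``all of $\mathbb{N}$'' is K\"onig's lemma, since the inverse system of admissible $n$-colourings of $\{1,\dots,N\}$ consists of nonempty finite sets). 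Only in this reformulation does the condition become a countable Boolean combination of the closed sets $\{t:\|x_p(t)-x_q(t)\|\le d\}$, hence Borel. With that gap filled, your argument goes through.
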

\begin{lemma}{\rm{\cite{Mixed-Hilfer-Nonlocal}}}
\label{lem:Mixed-lem-3}
Let $E$ be a Banach space and let $D \subset C([b_{1},b_{2}],E)$ be bounded and equicontinuous. Then $\alpha(D(t))$ is continuous on $[b_{1},b_{2}]$ and
\begin{align*}
\displaystyle \alpha(D)=\max_{t\in [b_{1},b_{2}]}\alpha(D(t)).
\end{align*}
\end{lemma}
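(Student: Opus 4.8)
The plan is to prove the two assertions separately, relying only on the equicontinuity of $D$, the elementary monotonicity and perturbation properties of the Kuratowski measure $\alpha$, and the inequality $\alpha(D(t))\le\alpha(D)$ already recorded above.

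\emph{Step 1: continuity of $t\mapsto\alpha(D(t))$.} I would first invoke the standard perturbation estimate for $\alpha$: if $A,B\subset E$ are bounded and each is contained in the closed $r$-neighbourhood of the other, then $|\alpha(A)-\alpha(B)|\le 2r$ (cover $B$ by finitely many sets of diameter arbitrarily close to $\alpha(B)$, enlarge each by $r$, and intersect with $A$). Given $\varepsilon>0$, equicontinuity of $D$ furnishes $\delta>0$ with $\|x(t)-x(s)\|<\varepsilon$ for all $x\in D$ whenever $|t-s|<\delta$; hence $D(t)$ lies in the $\varepsilon$-neighbourhood of $D(s)$ and conversely, so $|\alpha(D(t))-\alpha(D(s))|\le 2\varepsilon$. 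Thus $t\mapsto\alpha(D(t))$ is (uniformly) continuous on the compact interval $[b_1,b_2]$ and attains its maximum $M:=\max_{t\in[b_1,b_2]}\alpha(D(t))$.

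\emph{Step 2: the easy inequality.} Since $\alpha(D(t))\le\alpha(D)$ for each $t$, taking the maximum over $t$ yields $M\le\alpha(D)$.

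\emph{Step 3: the reverse inequality $\alpha(D)\le M$.} This is the substantive step. Fix $\varepsilon>0$, take $\delta$ from Step 1, and choose a partition $b_1=s_0<s_1<\dots<s_n=b_2$ of mesh $<\delta$. For each $i$, since $\alpha(D(s_i))\le M$, write $D(s_i)=\bigcup_{j=1}^{m_i}A_{i,j}$ with $\operatorname{diam}A_{i,j}\le M+\varepsilon$. For each multi-index $\mathbf j=(j_0,\dots,j_n)$ put
\[
D_{\mathbf j}=\{\,x\in D:\ x(s_i)\in A_{i,j_i}\ \text{for}\ i=0,\dots,n\,\},
\]
so that these finitely many sets cover $D$. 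For $x,y\in D_{\mathbf j}$ and $t\in[s_{i-1},s_i]$, the triangle inequality together with the choice of $\delta$ gives $\|x(t)-y(t)\|\le\|x(t)-x(s_i)\|+\|x(s_i)-y(s_i)\|+\|y(s_i)-y(t)\|<\varepsilon+(M+\varepsilon)+\varepsilon$, hence $\operatorname{diam}_{C([b_1,b_2],E)}D_{\mathbf j}\le M+3\varepsilon$. Therefore $\alpha(D)\le M+3\varepsilon$, and letting $\varepsilon\downarrow0$ gives $\alpha(D)\le M$. Combining Steps 2 and 3, $\alpha(D)=M=\max_{t\in[b_1,b_2]}\alpha(D(t))$.

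I expect the main obstacle to be exactly the covering construction in Step 3: one must assemble a finite cover of $D$ by subsets of small sup-norm diameter out of the finite covers of the individual ``slices'' $D(s_i)$, and this is where equicontinuity — rather than mere boundedness — is indispensable. By contrast, Steps 1 and 2 are routine once the Hausdorff-type perturbation estimate for $\alpha$ is in hand.
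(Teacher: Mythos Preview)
Your argument is correct and is the standard proof of this classical fact about the Kuratowski measure on equicontinuous families. Note, however, that the paper does not supply its own proof of this lemma: it is stated with a citation to \cite{Mixed-Hilfer-Nonlocal} and used as a black box, so there is no in-paper argument to compare against. Your write-up would serve perfectly well as a self-contained justification if one were wanted; the only minor remark is that in Step~1 the perturbation estimate you quote actually yields $|\alpha(D(t))-\alpha(D(s))|\le 2\varepsilon$ (as you wrote), and one could equally well cite the identity $\alpha(A+B_\varepsilon(0))=\alpha(A)+2\varepsilon$ directly, but this is cosmetic.
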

\begin{lemma}{\rm{\cite{Sadovskii theorem}}} {\rm{(Sadovskii fixed point theorem)}}
\label{lem:Sadovskii}
Let $E$ be a Banach space and $\Omega$ be a nonempty bounded convex closed set in $E$. If $\mathcal{Q}:\Omega\rightarrow \Omega$ is a condensing mapping, then $\mathcal{Q}$ has a fixed point in $\Omega$.
\end{lemma}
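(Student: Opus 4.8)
The plan is to deduce the statement from Schauder's fixed point theorem by extracting from $\Omega$ a nonempty \emph{compact} convex subset that is mapped into itself by $\mathcal{Q}$; the condensing hypothesis is exactly what forces compactness.

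First I would fix a point $x_{0}\in\Omega$ and consider the family $\Sigma$ of all nonempty closed convex subsets $C\subseteq\Omega$ with $x_{0}\in C$ and $\mathcal{Q}(C)\subseteq C$. This family is nonempty because $\Omega\in\Sigma$. Put $\Omega_{\infty}:=\bigcap_{C\in\Sigma}C$. A direct check shows $\Omega_{\infty}$ is closed, convex, contains $x_{0}$, and satisfies $\mathcal{Q}(\Omega_{\infty})\subseteq C$ for every $C\in\Sigma$, hence $\mathcal{Q}(\Omega_{\infty})\subseteq\Omega_{\infty}$; thus $\Omega_{\infty}\in\Sigma$ and it is the smallest element of $\Sigma$.

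Next I would set $\Omega_{*}:=\overline{\mathrm{co}}\bigl(\mathcal{Q}(\Omega_{\infty})\cup\{x_{0}\}\bigr)$, the closed convex hull. Since $\Omega_{\infty}$ is closed, convex, $\mathcal{Q}$-invariant and contains $x_{0}$, we get $\Omega_{*}\subseteq\Omega_{\infty}$. Conversely, $\mathcal{Q}(\Omega_{*})\subseteq\mathcal{Q}(\Omega_{\infty})\subseteq\Omega_{*}$ and $x_{0}\in\Omega_{*}$ show $\Omega_{*}\in\Sigma$, whence $\Omega_{\infty}\subseteq\Omega_{*}$. Therefore $\Omega_{\infty}=\overline{\mathrm{co}}\bigl(\mathcal{Q}(\Omega_{\infty})\cup\{x_{0}\}\bigr)$. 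Invoking the standard invariance properties of the Kuratowski measure---adjoining a single point and passing to the closed convex hull leave $\alpha(\cdot)$ unchanged---this gives $\alpha(\Omega_{\infty})=\alpha\bigl(\mathcal{Q}(\Omega_{\infty})\bigr)$. If $\alpha(\Omega_{\infty})>0$, this contradicts the condensing inequality $\alpha\bigl(\mathcal{Q}(\Omega_{\infty})\bigr)<\alpha(\Omega_{\infty})$. Hence $\alpha(\Omega_{\infty})=0$, so $\Omega_{\infty}$ is a nonempty compact convex set with $\mathcal{Q}(\Omega_{\infty})\subseteq\Omega_{\infty}$, and since $\mathcal{Q}$ is continuous, Schauder's fixed point theorem yields a fixed point of $\mathcal{Q}$ in $\Omega_{\infty}\subseteq\Omega$.

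The routine but essential bookkeeping is the verification that $\Omega_{\infty}$ is genuinely nonempty and $\mathcal{Q}$-invariant; the substantive ingredients, which do the real work, are Schauder's theorem together with the two elementary identities $\alpha(A\cup\{p\})=\alpha(A)$ and $\alpha(\overline{\mathrm{co}}\,A)=\alpha(A)$. The main obstacle is conceptual rather than computational: one must realize that the smallest invariant closed convex set containing a chosen point is automatically its own ``$\mathcal{Q}$-image hull,'' which is precisely where the condensing hypothesis can be triggered. If ``condensing'' is read in the weaker form $\alpha(\mathcal{Q}(B))<\alpha(B)$ whenever $\alpha(B)>0$, the argument above still applies without change.
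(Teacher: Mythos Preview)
Your argument is the standard and correct proof of Sadovski\u{\i}'s theorem: extract the minimal closed convex $\mathcal{Q}$-invariant set containing a fixed point, show it equals the closed convex hull of its own $\mathcal{Q}$-image (together with that point), and use the condensing hypothesis with $\alpha(\overline{\mathrm{co}}\,A)=\alpha(A)$ to force compactness before invoking Schauder. Note, however, that the paper does not supply a proof of this lemma at all; it is quoted as a known result from the cited reference, so there is no ``paper's own proof'' to compare against---your write-up simply fills in what the authors took for granted.
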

The subsequent Lemma is with reference to the generalized Gronwall inequality for fractional differential equation.
\begin{lemma}{\rm{\cite{Gronwall-Inequality}}}
\label{lem:Gronwall}
Suppose $b\geq 0$, $\beta> 0$ and $a(t)$ is a nonnegative function locally integrable on $0\leq t <T$ (some $T\leq +\infty$), and suppose $x(t)$ is nonnegative and locally integrable on $0\leq t <T$ with
\begin{align*}
x(t)\leq a(t)+b\int_{0}^{t}(t-s)^{\beta-1}x(s)ds
\end{align*}
on this interval; then
\begin{align*}
x(t)\leq a(t)+\int_{0}^{t}\Big[\sum_{n=1}^{\infty}\frac{(b\Gamma(\beta))^{n}}{\Gamma(n\beta)}(t-s)^{n\beta-1}a(s)\Big]ds, \enspace 0\leq t <T.
\end{align*}
\end{lemma}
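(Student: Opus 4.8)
The plan is to prove the inequality by iterating the integral operator on its right-hand side. Write $(\mathcal{B}\phi)(t):=b\int_{0}^{t}(t-s)^{\beta-1}\phi(s)\,ds$ for nonnegative locally integrable $\phi$; since $b\ge 0$ and the kernel is nonnegative, $\mathcal{B}$ is a well-defined linear, order-preserving operator on the cone of such functions, and the hypothesis reads $x\le a+\mathcal{B}x$ on $[0,T)$. Feeding this bound into itself and using monotonicity of $\mathcal{B}$, a short induction gives, for every $n\ge1$,
\[
x(t)\ \le\ \sum_{k=0}^{n-1}(\mathcal{B}^{k}a)(t)\ +\ (\mathcal{B}^{n}x)(t),\qquad 0\le t<T,
\]
with the convention $\mathcal{B}^{0}a=a$. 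So it is enough to (i) compute $\mathcal{B}^{k}a$ explicitly and (ii) show the remainder $\mathcal{B}^{n}x\to0$.

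For (i) I would show by induction that $(\mathcal{B}^{n}a)(t)=\int_{0}^{t}\frac{(b\Gamma(\beta))^{n}}{\Gamma(n\beta)}(t-s)^{n\beta-1}a(s)\,ds$. The base case is the definition of $\mathcal{B}$. The step from $n$ to $n+1$ amounts to applying $\mathcal{B}$ to the formula, interchanging the two integrations (Tonelli, everything being nonnegative), and evaluating $\int_{s}^{t}(t-\tau)^{\beta-1}(\tau-s)^{n\beta-1}\,d\tau$ via the substitution $\tau=s+(t-s)\sigma$, which yields the Beta integral $B(\beta,n\beta)\,(t-s)^{(n+1)\beta-1}=\frac{\Gamma(\beta)\Gamma(n\beta)}{\Gamma((n+1)\beta)}(t-s)^{(n+1)\beta-1}$; collecting constants gives the formula with $n$ replaced by $n+1$.

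For (ii), fix $T_{1}<T$ and work on $[0,T_{1}]$; the same induction with $a$ replaced by $x$ gives $(\mathcal{B}^{n}x)(t)=\frac{(b\Gamma(\beta))^{n}}{\Gamma(n\beta)}\int_{0}^{t}(t-s)^{n\beta-1}x(s)\,ds$, and once $n\beta\ge1$ we may bound $(t-s)^{n\beta-1}\le T_{1}^{\,n\beta-1}$ to get $(\mathcal{B}^{n}x)(t)\le \frac{(b\Gamma(\beta))^{n}T_{1}^{\,n\beta-1}}{\Gamma(n\beta)}\,\|x\|_{L^{1}[0,T_{1}]}$. Since $\Gamma(n\beta)$ grows faster than any geometric sequence, this tends to $0$ uniformly in $t\in[0,T_{1}]$, so $\mathcal{B}^{n}x\to0$ pointwise. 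Letting $n\to\infty$ in the iterated inequality and moving the sum inside the integral by monotone convergence (the partial sums being nonnegative and nondecreasing) turns $\sum_{k=0}^{\infty}(\mathcal{B}^{k}a)(t)$ into $a(t)+\int_{0}^{t}\big[\sum_{n=1}^{\infty}\frac{(b\Gamma(\beta))^{n}}{\Gamma(n\beta)}(t-s)^{n\beta-1}\big]a(s)\,ds$, which is the assertion; the inner series is finite for a.e.\ $s<t$ and the integral converges because that series is a Mittag--Leffler--type function of $(t-s)$ multiplied by the locally integrable weight $(t-s)^{\beta-1}$ (here $\beta>0$ is used).

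I expect step (ii) to be the only real obstacle: the remainder must be shown to vanish although $x$ is merely locally integrable, not bounded, and the kernels $(t-s)^{n\beta-1}$ are singular at $s=t$ when $n\beta<1$. This is not serious, since that singularity occurs for only finitely many $n$ — where local integrability of $x$ already makes each term finite — while for all large $n$ the kernel is bounded on $[0,T_{1}]$ and the factorial growth of $\Gamma(n\beta)$ forces the remainder to zero; as $T_{1}<T$ was arbitrary, the inequality holds on all of $[0,T)$.
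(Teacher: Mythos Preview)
The paper does not prove this lemma; it merely quotes it from \cite{Gronwall-Inequality} and uses it as a black box. Your iteration argument---feeding the hypothesis into itself, computing $\mathcal{B}^{n}a$ via the Beta integral, and killing the remainder $\mathcal{B}^{n}x$ through the super-geometric growth of $\Gamma(n\beta)$---is correct and is exactly the proof given in the cited source of Ye, Gao and Ding, so there is nothing to compare.
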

\begin{definition}{\rm{\cite{Hilfer-impulsive-inclusion}}}
\label{Hilfer-mild-solution}
A function $x \in PC_{1-\lambda}(J,E)$ is called the mild solution of system \eqref{eqn:Mixed-impul-Hilfer}, if for $ t \in J$
it satisfies the following integral equation
\begin{align}
\label{eqn:Mixed-solution-origi}
x(t)=S_{\mu,\nu}(t)x_{0}+&\displaystyle \sum_{i=1}^{k}S_{\mu,\nu}(t-t_{i})\phi_{i}(x(t_{i}),x(t_{i}))\nonumber\\
&+\int_{0}^{t}(t-s)^{\mu-1}P_{\mu}(t-s)g(s,x(s),x(s))ds
\end{align}
where,
\begin{align*}
S_{\mu,\nu}(t)= I_{0+}^{\nu(1-\mu)}K_{\mu}(t),\enspace K_{\mu}(t)=t^{\mu-1}P_{\mu}(t),\enspace P_{\mu}(t)=
\int_{0}^{\infty}\mu \theta \xi_{\mu}(\theta)Q(t^{\mu}\theta)d\theta,
\end{align*}
\begin{align*}
\varpi_{\mu}(\theta)= \frac{1}{\pi} \sum_{n=1}^{\infty}(-1)^{n-1}\theta^{-n\mu-1}
\frac{\Gamma(n\mu+1)}{n!}\sin(n\pi\mu),\enspace \theta\in (0,\infty)
\end{align*}
and $\xi_{\mu}(\theta)= \frac{1}{\mu}\theta^{-1-\frac{1}{\mu}}\varpi_{\mu}(\theta^{-\frac{1}{\mu}})$ is a probability density function defined on $(0,\infty)$, that is
\begin{align*}
\xi_{\mu}(\theta)\geq 0 \enspace \mbox{and} \int^{\infty}_{0}\xi_{\mu}(\theta)d\theta=1.
\end{align*}
\end{definition}
\begin{remark}{${}$}
\begin{enumerate}
\item
From{\rm\cite{Hilfer-exist-1}}, when $\nu=0$, the solution reduces to the solution of classical
Riemann-Liouville fractional derivative, that is, $S_{\mu,0}(t)=K_{\mu}(t)$.
\item
Similarly when $\nu=1$, the solution reduces to the solution of classical
Caputo fractional derivative, that is $S_{\mu,1(t)}=S_{\mu}(t)$.
\end{enumerate}
\end{remark}
\begin{lemma}{\rm{\cite{Hilfer-impulsive-inclusion}}}
\label{lem:Relax-bounds}
If the analytic semigroup $Q(t)(t\geq 0)$ is bounded uniformly, then the operator $P_{\mu}(t)$ and $S_{\mu,\nu}(t)$ satisfies the following bounded and
continuity conditions.
\begin{enumerate}
\item
$S_{\mu,\nu}(t)$ and $P_{\mu}(t)$ are linear bounded operators and for any $x\in E$,
\begin{align*}
\|S_{\mu,\nu}(t)x\|_{E}\leq \frac{M t^{\lambda-1}}{\Gamma(\lambda)}\|x\|_{E}\enspace \mbox{and}\enspace \|P_{\mu}(t)x\|_{E}\leq \frac{M}{\Gamma(\mu)}\|x\|_{E}.
\end{align*}
\item
$S_{\mu,\nu}(t)$ and $P_{\mu}(t)$ are strongly continuous, which means that for any $x\in E$ and $0<t^{'}<t^{''}\leq T$,
\begin{align*}
\|P_{\mu}(t')x-P_{\mu}(t'')x\|_{E}\rightarrow 0 \enspace \mbox{and}\enspace
\|S_{\mu,\nu}(t')x-S_{\mu,\nu}(t'')x\|_{E}\rightarrow 0 \enspace \mbox{as}\enspace t''\rightarrow t'.
\end{align*}
\end{enumerate}
\end{lemma}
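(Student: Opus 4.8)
\emph{Proof sketch.} The plan is to unwind the definitions of $P_{\mu}(t)$ and $S_{\mu,\nu}(t)$ and exploit the two standing facts about the semigroup — uniform boundedness, say $M:=\sup_{t\ge 0}\|Q(t)\|<\infty$, and strong continuity of $t\mapsto Q(t)$ — together with the normalization and the first moment of the probability density $\xi_{\mu}$.

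\emph{Part (1): the bounds.} Linearity of $P_{\mu}(t)$ and $S_{\mu,\nu}(t)$ is immediate because $Q(t)$ is linear and the defining integrals depend linearly on $x$. For $P_{\mu}(t)$ I would move the norm inside the (Bochner) integral,
\begin{align*}
\|P_{\mu}(t)x\|_{E}\le \int_{0}^{\infty}\mu\theta\,\xi_{\mu}(\theta)\,\|Q(t^{\mu}\theta)x\|_{E}\,d\theta \le M\|x\|_{E}\,\mu\int_{0}^{\infty}\theta\,\xi_{\mu}(\theta)\,d\theta,
\end{align*}
and then invoke the standard moment identity $\int_{0}^{\infty}\theta\,\xi_{\mu}(\theta)\,d\theta=\Gamma(2)/\Gamma(1+\mu)=1/\Gamma(1+\mu)$, so that $\mu\int_{0}^{\infty}\theta\,\xi_{\mu}(\theta)\,d\theta=\mu/\Gamma(1+\mu)=1/\Gamma(\mu)$ and $\|P_{\mu}(t)x\|_{E}\le \frac{M}{\Gamma(\mu)}\|x\|_{E}$. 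For $S_{\mu,\nu}(t)=I_{0+}^{\nu(1-\mu)}\big(t^{\mu-1}P_{\mu}(t)\big)$ I would write out the Riemann--Liouville integral (with the convention that $I_{0+}^{0}$ is the identity when $\nu=0$, in which case $S_{\mu,0}(t)=K_{\mu}(t)$ and $\lambda=\mu$, so the claimed bound is just $\|K_{\mu}(t)x\|_{E}=t^{\mu-1}\|P_{\mu}(t)x\|_{E}\le \frac{Mt^{\mu-1}}{\Gamma(\mu)}\|x\|_{E}$), namely
\begin{align*}
S_{\mu,\nu}(t)x=\frac{1}{\Gamma(\nu(1-\mu))}\int_{0}^{t}(t-s)^{\nu(1-\mu)-1}s^{\mu-1}P_{\mu}(s)x\,ds,
\end{align*}
bound $\|P_{\mu}(s)x\|_{E}\le \frac{M}{\Gamma(\mu)}\|x\|_{E}$, and evaluate the Beta integral $\int_{0}^{t}(t-s)^{\nu(1-\mu)-1}s^{\mu-1}\,ds=t^{\nu(1-\mu)+\mu-1}B(\nu(1-\mu),\mu)$. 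Since $\nu(1-\mu)+\mu=\mu+\nu-\mu\nu=\lambda$, the factors $\Gamma(\nu(1-\mu))$ and $\Gamma(\mu)$ cancel against those in $B(\nu(1-\mu),\mu)=\Gamma(\nu(1-\mu))\Gamma(\mu)/\Gamma(\lambda)$, leaving $\|S_{\mu,\nu}(t)x\|_{E}\le \frac{Mt^{\lambda-1}}{\Gamma(\lambda)}\|x\|_{E}$.

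\emph{Part (2): strong continuity.} Fix $x\in E$ and $0<t'<t''\le T$. For $P_{\mu}$, write
\begin{align*}
P_{\mu}(t'')x-P_{\mu}(t')x=\int_{0}^{\infty}\mu\theta\,\xi_{\mu}(\theta)\big[Q((t'')^{\mu}\theta)-Q((t')^{\mu}\theta)\big]x\,d\theta;
\end{align*}
for each fixed $\theta>0$ the bracketed term tends to $0$ in $E$ as $t''\to t'$ by strong continuity of $Q$, while the integrand is dominated by $2M\|x\|_{E}\,\mu\theta\,\xi_{\mu}(\theta)\in L^{1}(0,\infty)$, so the dominated convergence theorem gives $\|P_{\mu}(t'')x-P_{\mu}(t')x\|_{E}\to 0$. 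For $S_{\mu,\nu}$ (with $\nu>0$; the case $\nu=0$ reduces to the continuity of $K_{\mu}(\cdot)x=(\cdot)^{\mu-1}P_{\mu}(\cdot)x$ just established), I would split the integral over $[0,t'']$ as $\int_{0}^{t'}+\int_{t'}^{t''}$ and write $S_{\mu,\nu}(t'')x-S_{\mu,\nu}(t')x$ as
\begin{align*}
&\frac{1}{\Gamma(\nu(1-\mu))}\int_{0}^{t'}\big[(t''-s)^{\nu(1-\mu)-1}-(t'-s)^{\nu(1-\mu)-1}\big]s^{\mu-1}P_{\mu}(s)x\,ds\\
&\qquad+\frac{1}{\Gamma(\nu(1-\mu))}\int_{t'}^{t''}(t''-s)^{\nu(1-\mu)-1}s^{\mu-1}P_{\mu}(s)x\,ds.
\end{align*}
The second integral is bounded by $\frac{M\|x\|_{E}}{\Gamma(\nu(1-\mu))\Gamma(\mu)}\int_{t'}^{t''}(t''-s)^{\nu(1-\mu)-1}s^{\mu-1}\,ds$, which tends to $0$ as $t''\to t'$ since the kernel is integrable near $s=t''$ and the interval of integration shrinks; in the first integral the kernel difference tends to $0$ pointwise for $s\in(0,t')$ and, because $\nu(1-\mu)-1<0$ forces $(t''-s)^{\nu(1-\mu)-1}\le (t'-s)^{\nu(1-\mu)-1}$, it is dominated in absolute value by $(t'-s)^{\nu(1-\mu)-1}s^{\mu-1}\cdot\frac{M}{\Gamma(\mu)}\|x\|_{E}\in L^{1}(0,t')$, so dominated convergence again yields convergence to $0$.

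The two Beta/moment computations are routine; the only genuinely delicate point is the domination in the continuity statement for $S_{\mu,\nu}$, where the integrable singularity of $(t-s)^{\nu(1-\mu)-1}$ at $s=t$ must be controlled uniformly as $t''\to t'$ — this is precisely where the monotonicity of $r\mapsto r^{\nu(1-\mu)-1}$ for the negative exponent supplies an $s$-integrable majorant independent of $t''$.
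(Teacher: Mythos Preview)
Your argument is correct. The moment identity for $\xi_{\mu}$ and the Beta integral computation for $S_{\mu,\nu}$ are exactly the right ingredients, and your domination argument for the strong continuity of $S_{\mu,\nu}$ --- using that $r\mapsto r^{\nu(1-\mu)-1}$ is decreasing so that $(t'-s)^{\nu(1-\mu)-1}$ serves as an $L^{1}(0,t')$ majorant independent of $t''$ --- is the standard and correct way to handle the singular kernel.

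As for comparison: the paper does not prove this lemma at all; it is quoted directly from the cited reference \cite{Hilfer-impulsive-inclusion} and stated without argument. So there is no ``paper's own proof'' to compare against --- your write-up supplies precisely the details that the authors chose to import by citation. If anything, your treatment is slightly more careful than what one typically finds in the source literature, since you explicitly separate the degenerate case $\nu=0$ (where $I_{0+}^{0}$ is the identity and $S_{\mu,0}=K_{\mu}$) and make the domination step for $S_{\mu,\nu}$ explicit rather than gesturing at it.
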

\section{Main Results}
\label{Main results}
To prove the main results of this paper, a perturbed equivalent system with constant $C\geq 0$ given below is taken for consideration.
\begin{align}
\label{eqn:Mixed-alternate}
\left\{
  \begin{array}{ll}
   D_{0+}^{\mu,\nu}x(t)+(A+CI)x(t)= g(t,x(t),x(t))+Cx(t), \enspace t \in J, \enspace t\neq t_{k},\\
    \Delta I_{t_{k}}^{(1-\lambda)}x(t_{k})=\phi_{k}(x(t_{k}),x(t_{k})),\enspace k=1,2,\ldots l,\\
    I_{0+}^{(1-\lambda)}[x(0)]= x_{0}.
      \end{array}
\right.
\end{align}
\begin{remark}{${}$}
\label{rem:Mixed-Remark-alternate}
\begin{enumerate}
\item
With reference to {\rm{\cite{Monotone-semigroup}}}, for any $C\geq 0$, $-(A+CI)$ generates an analytic semigroup $R(t)=e^{-Ct}Q(t)$ and for $t\geq 0$, $R(t)$ is positive and $\displaystyle\sup_{t\in[0,\infty)}\|R(t)\|\leq M^{*}$ for $M^{*}\geq 1$.
\item
Let $S^{*}_{\mu,\nu}(t)$ and $P^{*}_{\mu}(t)$ for $t\geq 0$ be two families of operators defined by
\begin{align*}
S^{*}_{\mu,\nu}(t)=&I_{0+}^{\nu(1-\mu)}K^{*}_{\mu}(t),\quad K^{*}_{\mu}(t)=t^{\mu-1}P^{*}_{\mu}(t),\\
&P^{*}_{\mu}(t)=\int_{0}^{\infty}\mu \theta \xi_{\mu}(\theta)R(t^{\mu}\theta)d\theta.
\end{align*}
\item
The above two operators are positive for $(t\geq 0)$ and for any $x\in E$,
\begin{align*}
\|S^{*}_{\mu,\nu}(t)\|\leq \dfrac{M^{*} t^{\lambda-1}}{\Gamma(\lambda)}\enspace\|P^{*}_{\mu}(t)\|\leq \dfrac{M^{*}}{\Gamma(\mu)}\enspace \mbox{and}\enspace
\|K^{*}_{\mu}(t)\|\leq \dfrac{M^{*}t^{\mu-1}}{\Gamma(\mu)}.
\end{align*}
\item
$S^{*}_{\mu,\nu}(t)$ and $P^{*}_{\mu}(t)$ are strongly continuous, which means that for any $x\in E$, $0<t^{'}<t^{''}\leq T$, and as $t''\rightarrow t'$,
\begin{align*}
\|P^{*}_{\mu}(t')x-P^{*}_{\mu}(t'')x\|_{E}\rightarrow 0 \enspace \mbox{and}\enspace
\|S^{*}_{\mu,\nu}(t')x-S^{*}_{\mu,\nu}(t'')x\|_{E}\rightarrow 0.
\end{align*}
\end{enumerate}
\end{remark}
\begin{definition}
A function $x\in PC_{1-\lambda}(J,E)$ is said to be a mild solution of the problem {\rm{\eqref{eqn:Mixed-alternate}}} if $x$ satisfies the following integral equation.
\begin{align*}
x(t)=&S^{*}_{\mu,\nu}(t)x_{0}+\displaystyle \sum_{i=1}^{k}S^{*}_{\mu,\nu}(t-t_{i})\phi_{i}(x(t_{i}),x(t_{i}))\\
&+\int_{0}^{t}(t-s)^{\mu-1}P^{*}_{\mu}(t-s)\Big[g(s,x(s),x(s))+Cx(s)\Big]ds.
\end{align*}
\end{definition}
For $y,z\in PC_{1-\lambda}(J,E)$, $[y,z]$ is used to denote the order interval $\{x\in PC_{1-\lambda}(J,E):y\leq x\leq z\}$ and for $t\in J$, $[y(t),z(t)]$ denotes the order interval $\{x(t)\in PC_{1-\lambda}(J,E):y(t)\leq x(t)\leq z(t)\}$.
\begin{definition}
\label{eqn:Mixed-impul-Hilfer-upper}
If $y_{0}, z_{0}\in PC_{1-\lambda}(J,E)$ satisfies all the inequalities of
\begin{align*}
\left\{
  \begin{array}{ll}
   D_{0+}^{\mu,\nu}y_{0}(t)+Ay_{0}(t)\leq g(t,y_{0}(t),z_{0}(t))+L(y_{0}(t)-z_{0}(t)), \enspace t \in J, \enspace t\neq t_{k}\\
     \Delta I_{t_{k}}^{(1-\lambda)}y_{0}(t_{k})\leq\phi_{k}(y_{0}(t_{k}),z_{0}(t_{k})),\enspace k=1,2,\ldots l\\
    I_{0+}^{(1-\lambda)}[y_{0}]\leq x_{0}.
      \end{array}
\right.
\end{align*}
\begin{align*}
\left\{
  \begin{array}{ll}
   D_{0+}^{\mu,\nu}z_{0}(t)+Az_{0}(t)\geq g(t,z_{0}(t),y_{0}(t))+L(z_{0}(t)-y_{0}(t)), \enspace t \in J, \enspace t\neq t_{k}\\
     \Delta I_{t_{k}}^{(1-\lambda)}z_{0}(t_{k})\geq\phi_{k}(y_{0}(t_{k}),z_{0}(t_{k})),\enspace k=1,2,\ldots l\\
    I_{0+}^{(1-\lambda)}[z_{0}]\geq x_{0}.
      \end{array}
\right.
\end{align*}
 for a constant $L\geq 0$, then $y_{0}$ and $z_{0}$ are called the coupled lower and upper $L$-quasi solution of the problem {\rm{\eqref{eqn:Mixed-impul-Hilfer}}}. If the inequalities are replaced by equality, then $y_{0},z_{0}$ are called coupled $L$-quasi solution. And when $x_{0}:=y_{0}=z_{0}$, then $x_{0}$ is called the solution of the problem {\rm{\eqref{eqn:Mixed-impul-Hilfer}}}.
\end{definition}
The following theorem guarantees the existence of the extremal mild solution of the impulsive system \eqref{eqn:Mixed-impul-Hilfer}.
\begin{theorem}
\label{Mixed-Th-1}
Let $E$ be an ordered Banach space with the positive normal cone $N$. Assume that $Q(t)\geq 0$ and the impulsive system {\rm{\eqref{eqn:Mixed-impul-Hilfer}}} has both lower and upper solution given by $y_{0}$ and $z_{0}$ respectively, where $y_{0}, z_{0}\in PC_{1-\lambda}$ and $y_{0}\leq z_{0}$. By embracing the mixed monotone iterative procedure and presuming the following assumptions, the impulsive system {\rm{\eqref{eqn:Mixed-impul-Hilfer}}} has extremal solutions between $y_{0}$ and $z_{0}$.

\noindent
{\bf$A(1).$}
There exist constants $C\geq 0$ and $L\leq 0$ such that
\begin{align*}
g(t,y_{2},z_{2})-g(t,y_{1},z_{1})\geq -C(y_{2}-y_{1})-L(z_{1}-z_{2})
\end{align*}
and $y_{0}(t)\leq y_{1}(t)\leq y_{2}(t)\leq z_{0}(t)$, $y_{0}(t)\leq z_{2}(t)\leq z_{1}(t)\leq z_{0}(t)$ for any $t\in J$.

\noindent
{\bf$A(2).$}
The impulsive function for $t\in J$ satisfies
\begin{align*}
\phi_{k}(y_{1},z_{1})\leq \phi_{k}(y_{2},z_{2}), \enspace k=1,2,\ldots,l.
\end{align*}

\noindent
{\bf$A(3).$}
The sequence $\{y_{p}\}\subset [y_{0}(t),z_{0}(t)]$ and $\{z_{p}\}\subset [y_{0}(t),z_{0}(t)]$ for $t\in J$ is respectively increasing and decreasing monotonic sequences. In particular, there exists a constant $L_{1}\geq 0$ such that
\begin{align*}
\alpha \Big(\{g(t,y_{p},z_{p})\}\Big)\leq L_{1}\Big( \alpha \big(\{y_{p}\}\big)+\alpha \big(\{z_{p}\}\big)\Big),\enspace p=1,2,\ldots,.
\end{align*}

\noindent
{\bf $A(4).$}
Let $y_{p}=\mathcal{G}(y_{p-1},z_{p-1})$, $z_{p}=\mathcal{G}(z_{p-1},y_{p-1})$, $p=1,2,\ldots,$ such that sequence $y_{p}(0)$ and $z_{p}(0)$ are convergent.
\end{theorem}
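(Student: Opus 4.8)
My plan is to run the mixed monotone iterative scheme: build a solution operator $\mathcal{G}$ on the order interval $[y_0,z_0]$, generate two monotone sequences of iterates, and recover the coupled minimal and maximal mild solutions as their limits, compensating the noncompactness of $Q(t)$ with the Kuratowski measure $\alpha(\cdot)$, the singular Gronwall inequality and Sadovskii's theorem. \textbf{Step 1 (the operator).} I would pass to the equivalent perturbed system \eqref{eqn:Mixed-alternate} and, for $(v,w)$ in $D\times D$ with $D:=[y_0,z_0]\subset PC_{1-\lambda}(J,E)$, set $\mathcal{G}(v,w)$ equal to the unique mild solution of the \emph{linear} impulsive problem with forcing $g(t,v(t),w(t))+Cv(t)+L(v(t)-w(t))$, jumps $\phi_k(v(t_k),w(t_k))$ and datum $x_0$, i.e.
\begin{align*}
\mathcal{G}(v,w)(t)=&\,S^{*}_{\mu,\nu}(t)x_0+\sum_{0<t_i<t}S^{*}_{\mu,\nu}(t-t_i)\phi_i\big(v(t_i),w(t_i)\big)\\
&+\int_0^t (t-s)^{\mu-1}P^{*}_{\mu}(t-s)\big[g(s,v(s),w(s))+Cv(s)+L(v(s)-w(s))\big]\,ds .
\end{align*}
By Lemma \ref{lem:Relax-bounds} and Remark \ref{rem:Mixed-Remark-alternate}, $\mathcal{G}$ maps $D\times D$ continuously into $PC_{1-\lambda}(J,E)$, and $x=\mathcal{G}(x,x)$ holds precisely when $x$ is a mild solution of \eqref{eqn:Mixed-impul-Hilfer}; so the task reduces to locating the extremal coupled fixed point of $\mathcal{G}$ inside $D$.

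\textbf{Step 2 (monotonicity and invariance).} I would next record the comparison principle that flows from the positivity of $R(t)$, hence of $S^{*}_{\mu,\nu}$ and $P^{*}_{\mu}$ (Remark \ref{rem:Mixed-Remark-alternate}): if $u\in PC_{1-\lambda}(J,E)$ satisfies $D^{\mu,\nu}_{0+}u+(A+CI)u\geq\theta$ in the mild sense with $\Delta I^{(1-\lambda)}_{t_k}u(t_k)\geq\theta$ and $I^{(1-\lambda)}_{0+}u(0)\geq\theta$, then $u\geq\theta$ (immediate from the mild representation). Feeding $A(1)$ and $A(2)$ into this principle should give that $\mathcal{G}$ is mixed monotone on $D$ (non-decreasing in the first argument, non-increasing in the second), while the inequalities defining the coupled lower and upper solutions translate, again through the principle, into $y_0\leq\mathcal{G}(y_0,z_0)$ and $\mathcal{G}(z_0,y_0)\leq z_0$. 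Consequently $x\mapsto\mathcal{G}(x,x)$ maps $D$ into itself, and the iterates $y_p:=\mathcal{G}(y_{p-1},z_{p-1})$, $z_p:=\mathcal{G}(z_{p-1},y_{p-1})$ ($p\geq1$) satisfy, by induction and mixed monotonicity, $y_0\leq y_1\leq\cdots\leq y_p\leq z_p\leq\cdots\leq z_1\leq z_0$.

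\textbf{Step 3 (measure of noncompactness).} This is where the real work lies. Working on each subinterval $(t_k,t_{k+1}]$ with the weight $(t-t_k)^{1-\lambda}$, I would put $\varphi(t):=\alpha(\{y_p(t):p\geq1\})+\alpha(\{z_p(t):p\geq1\})$. Using the formula for $\mathcal{G}$, the subadditivity and translation properties of $\alpha$, Lemmas \ref{lem:Mixed lem-1} and \ref{lem:Mixed-cgt}, the operator bounds in Remark \ref{rem:Mixed-Remark-alternate}(3), the observations that the term $S^{*}_{\mu,\nu}(t)x_0$ is a singleton and that the impulse values $\phi_i(y_p(t_i),z_p(t_i))$ are controlled because $A(4)$ pins down $y_p(0),z_p(0)$ (this information being propagated along the subintervals), together with assumption $A(3)$, I expect to arrive at
\begin{align*}
\varphi(t)\leq \kappa\int_0^t (t-s)^{\mu-1}\varphi(s)\,ds,\qquad t\in J,
\end{align*}
with $\kappa$ depending only on $M^{*},\Gamma(\mu),C,L,L_1$. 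The generalized Gronwall inequality (Lemma \ref{lem:Gronwall} with $a\equiv0$, $b=\kappa$, $\beta=\mu$) then forces $\varphi\equiv0$, so $\{y_p(t)\}$ and $\{z_p(t)\}$ are relatively compact in $E$ for each $t$; strong continuity (Lemma \ref{lem:Relax-bounds}, Remark \ref{rem:Mixed-Remark-alternate}(4)) yields equicontinuity on each $(t_k,t_{k+1}]$, and Lemma \ref{lem:Mixed-lem-3} upgrades this to relative compactness of the two sequences in $PC_{1-\lambda}(J,E)$. The same estimate shows that $x\mapsto\mathcal{G}(x,x)$ is condensing on $D$, so Lemma \ref{lem:Sadovskii} already guarantees at least one mild solution of \eqref{eqn:Mixed-impul-Hilfer} in $D$.

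\textbf{Step 4 (limits, extremality, and the hard point).} Being monotone, order-bounded in the positive cone of $PC_{1-\lambda}(J,E)$ and relatively compact, $\{y_p\}$ and $\{z_p\}$ converge in $PC_{1-\lambda}(J,E)$, say $y_p\to\underline{x}$ and $z_p\to\overline{x}$, with $y_0\leq\underline{x}\leq\overline{x}\leq z_0$; continuity of $\mathcal{G}$ gives $\underline{x}=\mathcal{G}(\underline{x},\overline{x})$ and $\overline{x}=\mathcal{G}(\overline{x},\underline{x})$, i.e.\ $(\underline{x},\overline{x})$ is a coupled mild $L$-quasi solution. Finally, for any mild solution $x\in[y_0,z_0]$ one has $x=\mathcal{G}(x,x)$ and $y_0\leq x\leq z_0$, whence $y_p\leq x\leq z_p$ for all $p$ by mixed monotonicity and induction, so $\underline{x}\leq x\leq\overline{x}$; this identifies $\underline{x},\overline{x}$ as the minimal and maximal mild solutions of \eqref{eqn:Mixed-impul-Hilfer} in $[y_0,z_0]$ and completes the proof. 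The principal obstacle is Step 3: one must handle the impulse points (hence the piecewise space $PC_{1-\lambda}$ and its weights), the singular kernel $(t-s)^{\mu-1}$ inside the $\alpha$-estimate, and the noncompactness of $Q(t)$ all at once, and then apply the singular Gronwall inequality correctly; checking that the impulse contributions do not inflate $\alpha$ (precisely the role of $A(4)$) and that equicontinuity survives up to each $t_k^{+}$ are the delicate bookkeeping points.
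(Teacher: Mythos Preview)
Your overall architecture---define $\mathcal{G}$ from the perturbed system, verify mixed monotonicity and the inclusions $y_0\leq\mathcal{G}(y_0,z_0)$, $\mathcal{G}(z_0,y_0)\leq z_0$, run the iterates, kill $\alpha(\{y_p(t)\})+\alpha(\{z_p(t)\})$ interval-by-interval with the singular Gronwall lemma, and pass to the limit---is exactly the paper's route, including the propagation device whereby $\varphi\equiv0$ on $(t_{k-1},t_k]$ forces the impulse sets $\{\phi_k(y_{p}(t_k),z_{p}(t_k))\}$ to be precompact and hence $\alpha$-null on the next subinterval.

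Where you diverge is in the finishing step. The paper does \emph{not} invoke equicontinuity of the iterates, Lemma~\ref{lem:Mixed-lem-3}, or Sadovskii's theorem in the proof of Theorem~\ref{Mixed-Th-1}; those tools are held back for Theorem~\ref{Mixed:Th-2}, where equicontinuity of the semigroup is an explicit hypothesis. Here the argument is lighter: once $\varphi\equiv0$, the monotone sequences $\{y_p(t)\}$, $\{z_p(t)\}$ are pointwise relatively compact, hence pointwise convergent (monotone sequences with convergent subsequences converge), and one passes to the limit in the integral representation by Lebesgue dominated convergence, normality of $N$ supplying the uniform bound. Your claim that ``strong continuity \dots\ yields equicontinuity on each $(t_k,t_{k+1}]$'' is the weak link: strong continuity of $P^{*}_{\mu}$ and $S^{*}_{\mu,\nu}$ alone does not give equicontinuity of $\{\mathcal{G}(y_{p-1},z_{p-1})\}$ on a merely bounded set without equicontinuity (or compactness) of $Q(t)$, which Theorem~\ref{Mixed-Th-1} does not assume. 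Drop that step and the Sadovskii aside; the pointwise-plus-dominated-convergence route both matches the paper and closes your proof without the extra hypothesis.
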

\begin{proof}
As $C>0$, the problem \eqref{eqn:Mixed-impul-Hilfer} can be presented in the form of problem \eqref{eqn:Mixed-alternate}. So the proof of the existence of a unique mild solution for the problem \eqref{eqn:Mixed-alternate} is sufficient. Define the operator $\mathcal{G}:[y_{0},z_{0}]\times [y_{0},z_{0}]\rightarrow PC_{1-\lambda}(J,E)$ by
\begin{align}
\label{eqn:Mixed-mild-alternate}
\mathcal{G}(y,z)(t)=
\left\{
  \begin{array}{ll}
  S^{*}_{\mu,\nu}(t)x_{0}+\int_{0}^{t}(t-s)^{\mu-1}P^{*}_{\mu}(t-s)\Big[g(s,y(s),z(s))+\\
  \quad \quad (C+L)y(s)-Lz(s)\Big]ds, \enspace t\in [0,t_{1}],\\
  S^{*}_{\mu,\nu}(t)x_{0}+\displaystyle \sum_{i=1}^{k}S^{*}_{\mu,\nu}(t-t_{i})\phi_{i}(y(t_{i}),z(t_{i}))\\
  \quad \quad+\int_{0}^{t}(t-s)^{\mu-1}P^{*}_{\mu}(t-s)\Big[g(s,y(s),z(s))\\
  +(C+L)y(s)-Lz(s)\Big]ds, \quad t\in (t_{k},t_{k+1}],\enspace k=1,2,\ldots l.
  \end{array}
\right.
\end{align}
The map $\mathcal{G}(y,z)(t)$ is continuous since $g$ is continuous. By the Definition \ref{Hilfer-mild-solution}, the fixed points of the operator $\mathcal{G}$ are equivalent to the mild solution of the system given in \eqref{eqn:Mixed-solution-origi}. That is,
\begin{align}
\label{eqn:Mixed-fixed}
\mathcal{G}(x,x)=x.
\end{align}
The following steps are required for the completion of the proof.

\noindent
{\bf\underline{Step 1.}} To show $\mathcal{G}(y_{1},z_{1})\leq \mathcal{G}(y_{2},z_{2})$.

\vspace{3mm} The condition $A(1)$ is used to reduce the below inequalities, which can be applied directly in the proof of the theorem. That is, $\forall t\in J^{'}$,
\begin{align*}
y_{0}(t)\leq y_{1}(t)\leq y_{2}(t)\leq z_{0}(t),& \enspace y_{0}(t)\leq z_{2}(t)\leq z_{1}(t)\leq z_{0}(t). \\
\Rightarrow g(t,y_{1}(t),z_{1}(t))+C y_{1}(t)-Lz_{1}(t)&\leq g(t,y_{2}(t),z_{2}(t))+Cy_{2}(t)-Lz_{2}.
\end{align*}
\begin{align}
\label{eqn:Mixed:alternate-A1}
\Rightarrow g(t,y_{1}(t),z_{1}(t))&+(C+L)y_{1}(t)-Lz_{1}(t)\nonumber\\
&\leq g(t,y_{2}(t),z_{2}(t))+(C+L)y_{2}(t)-Lz_{2}.
\end{align}
Considering the case for $t\in J_{0}^{'}$, for $J_{0}^{'}=[0,t_{1}]$:-

 The operators $S^{*}_{\mu,\nu}(t)$ and $P^{*}_{\mu}(t)$ are positive operators, and hence when the mild solutions are compared, using \eqref{eqn:Mixed:alternate-A1}, the following inequality is obtained.
\begin{align*}
\int_{0}&^{t}(t-s)^{\mu-1}P^{*}_{\mu}(t-s)\Big[g(s,y_{1}(s),z_{1}(s))+(C+L)y_{1}(s)-Lz_{1}(s)\Big]ds\leq\\
&\int_{0}^{t}(t-s)^{\mu-1}P^{*}_{\mu}(t-s)\Big[g(s,y_{2}(s),z_{2}(s))+(C+L)y_{2}(s)-Lz_{2}(s)\Big]ds.
\end{align*}
In which case, for $\forall t\in J_{k}^{'}$, with $J_{k}^{'}=(t_{k},t_{k+1}]$, $k=1,2,\ldots l$, applying the condition $A(2)$  yields
\begin{align*}
 S^{*}&_{\mu,\nu}(t)x_{0}+\displaystyle \sum_{i=1}^{k}S^{*}_{\mu,\nu}(t-t_{i})\phi_{i}(y_{1}(t_{i}),z_{1}(t_{i}))\\
 &+\int_{0}^{t}(t-s)^{\mu-1}P^{*}_{\mu}(t-s)\Big[g(s,y_{1}(s),z_{1}(s))+(C+L)y_{1}(s)-Lz_{1}(s)]ds\leq\\
 S^{*}&_{\mu,\nu}(t)x_{0}+\displaystyle \sum_{i=1}^{k}S^{*}_{\mu,\nu}(t-t_{i})\phi_{i}(y_{2}(t_{i}),z_{2}(t_{i}))\\
 &+\int_{0}^{t}(t-s)^{\mu-1}P^{*}_{\mu}(t-s)\Big[g(s,y_{2}(s),z_{2}(s))+(C+L)y_{2}(s)-Lz_{2}(s)\Big]ds.
\end{align*}
Eventually, $\mathcal{G}(y_{1},z_{1})(t)\leq \mathcal{G}(y_{2},z_{2})(t)$ for $t\in J$.

\noindent
{\bf\underline{Step 2.}} To show $y_{0}\leq\mathcal{G}(y_{0},z_{0})$ ; $\mathcal{G}(z_{0},y_{0})\leq z_{0}$:-

\vspace{3mm} For the case for which $t\in J_{0}^{'}$:-

Let $D^{\mu,\nu}_{0+}z_{0}(t)+Az_{0}(t)+Cz_{0}(t)=\xi(t)$, $\xi(t)\in PC_{1-\lambda}(J,E)$  By the Definition \ref{eqn:Mixed-impul-Hilfer-upper} of the coupled upper $L$-quasi solution, the mild solution of the system \eqref{eqn:Mixed-impul-Hilfer} can be written as
\begin{align*}
z_{0}(t)=&S^{*}_{\mu,\nu}(t)z_{0}+\int_{0}^{t}(t-s)^{\mu-1}P^{*}_{\mu}(t-s)\xi(s)ds\\
\geq & S^{*}_{\mu,\nu}(t)x_{0}+\int_{0}^{t}(t-s)^{\mu-1}P^{*}_{\mu}(t-s)\Big[g(s,z_{0}(s),y_{0}(s))\\
+&(C+L)z_{0}(s)-Ly_{0}(s)\Big]ds.
\end{align*}
From \eqref{eqn:Mixed-mild-alternate}, it can be observed that $z_{0}(t)\geq  \mathcal{G}(z_{0},y_{0})(t).$\\
For $t\in J_{1}^{'}$:-
\begin{align*}
S^{*}&_{\mu,\nu}(t)z_{0}+S^{*}_{\mu,\nu}(t-t_{1})\phi_{1}(z_{0}(t_{1}),y_{0}(t_{1}))+\int_{0}^{t}(t-s)^{\mu-1}P^{*}_{\mu}(t-s)\xi(s)ds\\
&\geq  S^{*}_{\mu,\nu}(t)x_{0}+S^{*}_{\mu,\nu}(t-t_{1})\phi_{1}(z_{0}(t_{1}),y_{0}(t_{1}))+\int_{0}^{t}(t-s)^{\mu-1}P^{*}_{\mu}(t-s)\\
&\Big[g(s,z_{0}(s),y_{0}(s))+(C+L)z_{0}(s)-Ly_{0}(s)\Big]ds.\\
\Rightarrow & z_{0}(t)\geq  \mathcal{G}(z_{0},y_{0})(t).
\end{align*}
Progressing in the same manner, every $J_{k}^{'}$, yields $z_{0}(t)\geq \mathcal{G}(z_{0},y_{0})(t)$. In the same manner, it can be proved that $y_{0}(t)\leq \mathcal{G}(y_{0},z_{0})(t)$ by considering the lower $L$-quasi solutions. Altogether, it can be deduced that
\begin{align*}
y_{0}(t)\leq \mathcal{G}(y_{0},z_{0})(t)\leq \mathcal{G}(x,x)(t)\leq\mathcal{G}(z_{0},y_{0})(t)\leq z_{0}(t).
\end{align*}
Henceforth the conclusion may be drawn that
$$
\mathcal{G}:[y_{0},z_{0}]\times [y_{0},z_{0}] \rightarrow PC_{1-\lambda}(J,E)
$$
is an increasing mixed monotonic operator. By means of the iterative pattern, two sequence $\{y_{p}\}$ and $\{z_{p}\}$ can be defined as,
\begin{align}
\label{eqn:Mixed-iterative-pattern}
y_{p}=\mathcal{G} (y_{p-1},z_{p-1}); \enspace z_{p}=\mathcal{G} (z_{p-1},y_{p-1});\enspace p=1,2,\ldots.
\end{align}
Eventually, due to the monotonicity property of $ \mathcal {G}$, an increasing sequence is derived as,
\begin{align}
\label{eqn:Mixed-inequality}
y_{0}\leq y_{1}\leq y_{2}\leq \ldots \leq y_{p}\leq \ldots \leq z_{p}\leq \ldots \leq z_{2}\leq z_{1}\leq z_{0}.
\end{align}

\noindent
{\bf\underline{Step 3.}} Convergence of sequences $\{y_{p}\}$ and $\{z_{p}\}$ in $J^{'}$:-

Let $B_{p}=\{y_{p}|p\in \mathbb{N}\}+\{z_{p}|p\in \mathbb{N}\}$; $B_{1}=\{y_{p-1}|p\in \mathbb{N}\}$; $B_{2}=\{z_{p-1}|p\in \mathbb{N}\}$; $B_{3}=\{(y_{p-1},z_{p-1})\}|p\in \mathbb{N}$ and $B_{4}=\{(z_{p-1},y_{p-1})\}|p\in \mathbb{N}$. Equation \eqref{eqn:Mixed-iterative-pattern} gives the relation $B_{1}=\mathcal {G}(B_{3}(t))$ and $B_{2}=\mathcal {G}(B_{4}(t))$. Let $\psi (t):=\alpha(B_{p}(t))$. By proving that $\psi(t)\equiv 0$ on every interval $J^{'}_{k}$ means that $\alpha(B_{p}(t))\equiv 0$ for $k=1,2,\ldots,l$, and hence $\{y_{p}\}+\{z_{p}\}$ is precompact in $E$ for every $t\in J$. Ultimately, by the definition of precompact, $\{y_{p}\}$ and $\{z_{p}\}$ have converging subsequence in $E$. Thus it is necessary to prove that $\psi (t)\equiv0$.

For $t\in J_{0}^{'}$ for $J_{0}^{'}=(0,t_{1}]$:-
\begin{align*}
\psi(t)&=\alpha(B_{p}(t))=\alpha \Big(B_{1}(t)+B_{2}(t)\Big)\\
&=\alpha\Big(\mathcal {G}(B_{3}(t))+\mathcal {G}(B_{4}(t))\Big)=\alpha\Big(\mathcal {G}(y_{p-1},z_{p-1})(t)+ \mathcal {G}(z_{p-1},y_{p-1})(t)\Big)
\end{align*}
\begin{align*}
\psi(t)&=\alpha\Big(\Big\{S^{*}_{\mu,\nu}(t)x_{0}+\int_{0}^{t}(t-s)^{\mu-1}P^{*}_{\mu}(t-s)\Big[g(s,y_{p-1},z_{p-1}(s))\\
&+(C+L)y_{p-1}(s)-Lz_{p-1}(s)\Big]ds+S^{*}_{\mu,\nu}(t)x_{0}+\int_{0}^{t}(t-s)^{\mu-1}P^{*}_{\mu}(t-s)\\
&\Big[g(s,z_{p-1}(s),y_{p-1}(s))+(C+L)z_{p-1}(s)-Ly_{p-1}(s)\Big]ds\Big\}:p\in \mathbb{N} \Big).
\end{align*}
The below inequality is the consequence of Lemma \ref{lem:Mixed-cgt}.
\begin{align*}
\psi(t)\leq & 2 \int_{0}^{t}\alpha\Big(\Big\{(t-s)^{\mu-1}P^{*}_{\mu}(t-s)\Big[g(s,y_{p-1}(s),z_{p-1}(s))\\
&+g(s,z_{p-1}(s),y_{p-1}(s))+C\big(y_{p-1}(s)+z_{p-1}(s)\big)\Big]ds\Big\}:p=1,2,\ldots\Big).
\end{align*}
Applying the assumed conditions along with Lemma \ref{lem:Relax-bounds} results in
\begin{align*}
\psi(t) &\leq \dfrac{2M^{*}}{\Gamma(\mu)}\int_{0}^{t}(t-s)^{\mu-1}\Big[(2L_{1}+C)\big(\alpha B_{1}(s)+\alpha B_{2}(s)\big)\Big]ds.\\
&=\dfrac{2M^{*}}{\Gamma(\mu)}(2L_{1}+C)\int_{0}^{t}(t-s)^{\mu-1}\psi(s)ds.
\end{align*}
By Lemma \ref{lem:Gronwall}, $\psi (t)\equiv 0$ on $J_{0}^{'}$. Thus $\{y_{p}(t)\}+\{z_{p}(t)\}$ is precompact and hence $\{y_{p}(t)\}$ and $\{z_{p}(t)\}$ are precompact for $t\in [0,t_{1}]$. In this regard, $\phi_{1}(B_{3}(t_{1}))$ and $\phi_{1}(B_{4}(t_{1}))$ are indeed precompact. Hence

$\alpha\big(\phi_{1}(B_{3}(t_{1}))\big)=0$ and $\alpha\big(\phi_{1}(B_{4}(t_{1}))\big)=0$.

Now for $t\in J_{1}^{'}$, for $J_{1}^{'}=(t_{1},t_{2}]$:-
\begin{align*}
\psi(t)=\alpha (B_{p}(t))&=\alpha\Big(\mathcal {G}(B_{3}(t))+\mathcal {G}(B_{4}(t))\Big)\\
&=\alpha\Big(\mathcal {G}(y_{p-1},z_{p-1})(t)+ \mathcal {G}(z_{p-1},y_{p-1})(t)\Big)
\end{align*}
\begin{align*}
\psi(t)&=\alpha\Bigg(\Big\{S^{*}_{\mu,\nu}(t)x_{0}+S^{*}_{\mu,\nu}(t)\phi_{1}\Big(y_{p-1}(t_{1}),z_{p-1}(t_{1})\Big)
+\int_{0}^{t}(t-s)^{\mu-1}\\
&P^{*}_{\mu}(t-s)\Big[g(s,y_{p-1}(s),z_{p-1}(s))+(C+L)y_{p-1}(s)-Lz_{p-1}(s)\Big]ds\\
&+S^{*}_{\mu,\nu}(t)x_{0}+S^{*}_{\mu,\nu}(t)\phi_{1}\Big(z_{p-1}(t_{1}),y_{p-1}(t_{1})\Big)+\int_{0}^{t}(t-s)^{\mu-1}\\
&P^{*}_{\mu}(t-s)\Big[g(s,z_{p-1}(s),y_{p-1}(s))+(C+L)z_{p-1}(s)-Ly_{p-1}(s)\Big]ds\Big\}\Bigg).
\end{align*}
\begin{align*}
\psi(t)&\leq \dfrac{2M^{*}b^{1-\lambda}}{\Gamma (\lambda)}\Big[\alpha \big(\phi_{1}(B_{3}(t_{1}))+\phi_{1}(B_{4}(t_{1}))\big)\Big]\\
&+\dfrac{2M^{*}}{\Gamma(\mu)}(2L_{1}+C)\int_{0}^{t}\int_{0}^{t}(t-s)^{\mu-1}\psi(s)ds.\\
&\leq \dfrac{2M^{*}}{\Gamma(\mu)}(2L_{1}+C)\int_{0}^{t}(t-s)^{\mu-1}\psi(s)ds.
\end{align*}
By Lemma \ref{lem:Gronwall}, $\psi (t)\equiv 0$ in $J_{1}^{'}$. By Proceeding the same way interval by interval, it can be proved that $\psi(t)\equiv 0$ on every interval $J_{k}^{'}$, $k=1,2,\ldots l$. Thus $\{y_{p}\}$ and $\{z_{p}\}$ are precompact and eventually for $p=1,2,\ldots$, $\{y_{p}\}$ and $\{z_{p}\}$ has a converging subsequence and from \eqref{eqn:Mixed-inequality}, it can be observed that $\{y_{p}\}$ and $\{z_{p}\}$ are converging sequences and hence there exists $\underline{x}(t)$, $\overline{x}(t)\in E$ such that
\begin{align*}
\displaystyle \lim_{p\rightarrow \infty}y_{p}(t)\rightarrow \underline{x}(t),\enspace \displaystyle \lim_{p\rightarrow \infty}z_{p}(t)\rightarrow \overline{x}(t),\enspace t\in J.
\end{align*}
 From \eqref{eqn:Mixed-iterative-pattern} and using the fact that $y_{p}(t)=\mathcal{G}(y_{p-1},z_{p-1})(t)$, \eqref{eqn:Mixed-mild-alternate} can be represented as below
\begin{align*}
y_{p}(t)=
\left\{
  \begin{array}{ll}
  S^{*}_{\mu,\nu}(t)x_{0}+\int_{0}^{t}(t-s)^{\mu-1}P^{*}_{\mu}(t-s)\Big[g(s,y_{p-1}(s),z_{p-1}(s))\\
  \enspace +(C+L)y_{p-1}(s)-Lz_{p-1}(s)\Big]ds, \enspace t\in [0,t_{1}]\\
  S^{*}_{\mu,\nu}(t)x_{0}+\displaystyle \sum_{i=1}^{k}S^{*}_{\mu,\nu}(t-t_{i})\phi_{i}(y_{p-1}(t_{i}),z_{p-1}(t_{i}))\\
  \enspace+\int_{0}^{t}(t-s)^{\mu-1}P^{*}_{\mu}(t-s)\Big[g(s,y_{p-1}(s),z_{p-1}(s))\\
  \enspace +(C+L)y_{p-1}(s)-Lz_{p-1}(s)\Big]ds,\enspace t\in (t_{k},t_{k+1}],\enspace k=1,2,\ldots l.
  \end{array}
\right.
\end{align*}
Using Lebesgue dominated convergence theorem, as $p\rightarrow \infty$
\begin{align*}
\underline{x}(t)=
\left\{
  \begin{array}{ll}
  S^{*}_{\mu,\nu}(t)x_{0}+\int_{0}^{t}(t-s)^{\mu-1}P^{*}_{\mu}(t-s)\Big[g(s,\underline{x}(s),\overline{x}(s))\\
  \enspace +(C+L)\underline{x}(s)-L\overline{x}(s)\Big]ds, \enspace t\in [0,t_{1}],\\
  S^{*}_{\mu,\nu}(t)x_{0}+\displaystyle \sum_{i=1}^{k}S^{*}_{\mu,\nu}(t-t_{i})\phi_{i}(\underline{x}(t_{i}),\overline{x}(t_{i}))\\
  \enspace+\int_{0}^{t}(t-s)^{\mu-1}P^{*}_{\mu}(t-s)\Big[g(s,\underline{x}(s),\overline{x}(s))\\
  \enspace +(C+L)\underline{x}(s)-L\overline{x}(s)\Big]ds,\enspace t\in (t_{k},t_{k+1}],\enspace k=1,2,\ldots l.
  \end{array}
\right.
\end{align*}
It can be observed that $\underline{x}(t)\in PC_{1-\lambda}(J,E)$ and $\underline{x}(t)=\mathcal{G}(\underline{x},\overline{x})(t)$. In a similar manner, it can be proved that $\exists$ $\overline{x}(t)\in PC_{1-\lambda}(J,E)$ such that $\overline{x}(t)=\mathcal{G}(\underline{x},\overline{x})(t)$. With the monotonicity property of $\mathcal{G}$, it can be concluded that $y_{0}\leq \underline{x}\leq \overline{x}\leq z_{0}$. This proves that there exists minimal and maximal solutions $\underline{x}$ and $\overline{x}$ respectively in $[y_{0},z_{0}]$ for the given impulsive system \eqref{eqn:Mixed-impul-Hilfer}.
\end{proof}
%
%
The existence of mild solution of \eqref{eqn:Mixed-impul-Hilfer} can be discussed by replacing the conditions $A(2)$ and $A(3)$ by the below given conditions.

\noindent
{\bf$A(2^{*}).$}
 The impulsive function $\phi_{k}(\cdot,\cdot)$ satisfies
\begin{align*}
\phi_{k}(y_{1},z_{1})\leq \phi_{k}(y_{2},z_{2}), \enspace k=1,2,\ldots,l.
\end{align*}
for $t\in J$ and $y_{0}(t)\leq y_{1}\leq y_{2}\leq z_{0}(t)$, $y_{0}(t)\leq z_{2}\leq z_{1}\leq z_{0}(t)$. Also there exists $M_{k}>0$ satisfying the condition given by
\begin{align}
\label{Mixed-Mk cdn}
\sum_{k=1}^{l}M_{k}\leq \dfrac{\Gamma(\lambda)\Gamma(\mu+1)-4M^{*}(2L_{1}+C)T^{\mu}}{4M^{*}T^{\lambda-1}\Gamma(\mu-1)}
\end{align}
such that $\alpha\big(\phi_{k}(D_{1},D_{2})\big)\leq M_{k}[\alpha(D_{1})+\alpha(D_{2})]$. In the above condition \eqref{Mixed-Mk cdn} the denominator does not jump to infinity as the term $M_{k}$ takes its value from $J_{k}^{'}$, $k>0$. That is $T\neq 0$.

\noindent
{\bf$A(3^{*}).$} There exists a constant $L_{1}<0$ such that
\begin{align*}
\alpha(g(t,D_{1},D_{2}))\leq L_{1}(\alpha(D_{1})+\alpha(D_{2})),\enspace t\in J
\end{align*}

\noindent with the countable sets $D_{1}=\{y_{p}\}$ and $D_{2}=\{z_{p}\}$ in $[y_{0}(t),z_{0}(t)]$. The following theorem utilizes both the above inequalities where the existence of atleast one mild solution between the coupled $L$-quasi upper and lower solutions is investigated. Also the semigroup generated by the operator $-A$ is assumed to be equicontinuous.
\begin{theorem}
\label{Mixed:Th-2}
Let $E$ be an ordered Banach space with the positive normal cone $N$. Assume that $Q(t)\geq 0$ which is equicontinuous on $E$; $g\in C(J \times E \times E,E)$; $x_{0}\in E$. Let the impulsive system {\rm{\eqref{eqn:Mixed-impul-Hilfer}}} has coupled $L$-quasi lower and upper solution, given by $y_{0}$ and $z_{0}$ respectively, where $y_{0}, z_{0}\in PC_{1-\lambda}$ and $y_{0}\leq z_{0}$. If the assumptions $A(1)$, $A(2^{*})$ and $A(3^{*})$ are satisfied, then the impulsive system {\rm{\eqref{eqn:Mixed-impul-Hilfer}}} has coupled minimal and maximal $L$-quasi mild solution between $[y_{0},z_{0}]$ and at least one mild solution in $[y_{0},z_{0}]$ between $\underline{x}$ and $\overline{x}$ such that for $p\rightarrow\infty$, $y_{p}(t)\rightarrow \underline{x}; \enspace z_{p}(t)\rightarrow \overline{x}, \enspace t\in J$. Here $y_{p}$ and $z_{p}$ are given as $y_{p}=\mathcal{G}(y_{p-1},z_{p-1})$, $z_{p}=\mathcal{G}(z_{p-1},y_{p-1})$, that satisfy
\begin{align*}
y_{0}(t)\leq y_{1}(t)\leq \ldots \leq y_{p}(t)\leq \ldots \leq \underline{x} \leq \overline{x} \leq \ldots \leq z_{p}(t)\ldots \leq z_{1}(t)\leq z_{0}(t).
\end{align*}
\end{theorem}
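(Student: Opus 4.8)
The plan is to run the same machinery as in the proof of Theorem~\ref{Mixed-Th-1}, replacing the Gronwall step that produced precompactness by a condensing-operator argument, so that Sadovskii's fixed point theorem (Lemma~\ref{lem:Sadovskii}) can be invoked for the existence of a mild solution. First I would pass to the perturbed problem~\eqref{eqn:Mixed-alternate} and introduce the operator $\mathcal{G}$ of~\eqref{eqn:Mixed-mild-alternate}. Under $A(1)$ and the monotonicity part of $A(2^{*})$, Steps~1 and~2 of Theorem~\ref{Mixed-Th-1} go through unchanged: $\mathcal{G}$ is an increasing mixed monotone operator on $[y_{0},z_{0}]\times[y_{0},z_{0}]$ with $y_{0}\leq\mathcal{G}(y_{0},z_{0})$ and $\mathcal{G}(z_{0},y_{0})\leq z_{0}$, these two inequalities coming from the coupled $L$-quasi lower/upper solution inequalities. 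Consequently the iterates $y_{p}=\mathcal{G}(y_{p-1},z_{p-1})$, $z_{p}=\mathcal{G}(z_{p-1},y_{p-1})$ are well defined, stay in $[y_{0},z_{0}]$ and obey the monotone chain displayed in the statement; because $N$ is normal, $[y_{0},z_{0}]$ is a bounded set of $PC_{1-\lambda}(J,E)$.

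The new ingredient is equicontinuity. Exploiting that $Q(t)$ is equicontinuous, together with Remark~\ref{rem:Mixed-Remark-alternate}(3)--(4), I would show that $\mathcal{G}$ carries bounded subsets of $[y_{0},z_{0}]\times[y_{0},z_{0}]$ into sets that are bounded and equicontinuous on each closed subinterval $[t_{k},t_{k+1}]$ in the weighted norm of $PC_{1-\lambda}$; the delicate points are the singularity of the kernel $(t-s)^{\mu-1}$ as $s\to t$ and the weights $(t-t_{k})^{1-\lambda}$ near the impulse nodes, which I would handle by splitting the integral and using absolute continuity of the Lebesgue integral. In particular the set $D:=\{y_{p}\}\cup\{z_{p}\}$ is bounded and equicontinuous on each $J'_{k}$, so Lemma~\ref{lem:Mixed-lem-3} yields $\alpha(D)=\max_{t\in[t_{k},t_{k+1}]}\alpha(D(t))$ on each piece, which reduces matters to a pointwise bound on $\psi(t):=\alpha(D(t))$.

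For that pointwise bound I would argue exactly as in Step~3 of Theorem~\ref{Mixed-Th-1}: apply Lemma~\ref{lem:Mixed-cgt} to the integral term, $A(3^{*})$ to control $\alpha$ of the nonlinearity, $A(2^{*})$ to control $\alpha$ of the impulse terms, and the operator bounds of Remark~\ref{rem:Mixed-Remark-alternate}(3), to reach on $J'_{k}$ an estimate of the form
\[
\psi(t)\leq c_{1}\Big(\sum_{j=1}^{k}M_{j}\Big)\overline{\psi}
+\frac{2M^{*}}{\Gamma(\mu)}(2L_{1}+C)\int_{0}^{t}(t-s)^{\mu-1}\psi(s)\,ds ,
\]
where $\overline{\psi}$ is the maximum of $\psi$ over the current subinterval and $c_{1}$ is an explicit constant depending only on $M^{*},T,\lambda$. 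Using $\int_{0}^{t}(t-s)^{\mu-1}\psi(s)\,ds\leq\tfrac{T^{\mu}}{\mu}\overline{\psi}$ and taking the maximum over $t$, the coefficient multiplying $\overline{\psi}$ is the quantity that hypothesis~\eqref{Mixed-Mk cdn} forces to be strictly less than $1$; hence $\overline{\psi}=0$, i.e.\ $\psi\equiv0$ on $J'_{k}$. Running this interval by interval for $k=0,1,\dots,l$ shows $D$ is precompact, so the monotone sequences $\{y_{p}\},\{z_{p}\}$ converge, say $y_{p}(t)\to\underline{x}(t)$ and $z_{p}(t)\to\overline{x}(t)$. Passing to the limit in~\eqref{eqn:Mixed-mild-alternate} by the dominated convergence theorem (legitimate since $[y_{0},z_{0}]$ is norm bounded and $N$ is normal) gives $\underline{x}=\mathcal{G}(\underline{x},\overline{x})$ and $\overline{x}=\mathcal{G}(\overline{x},\underline{x})$ in $PC_{1-\lambda}(J,E)$, the coupled minimal and maximal mild $L$-quasi solutions, with extremality following from the monotonicity of $\mathcal{G}$ as in Theorem~\ref{Mixed-Th-1}.

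Finally, to produce a genuine mild solution trapped between $\underline{x}$ and $\overline{x}$, I would set $\mathcal{F}(x):=\mathcal{G}(x,x)$ and restrict it to the order interval $[\underline{x},\overline{x}]$. Mixed monotonicity of $\mathcal{G}$ together with $\mathcal{G}(\underline{x},\overline{x})=\underline{x}$ and $\mathcal{G}(\overline{x},\underline{x})=\overline{x}$ gives $\mathcal{F}([\underline{x},\overline{x}])\subseteq[\underline{x},\overline{x}]$, and the same $\alpha$-estimate (now for an arbitrary bounded $D\subset[\underline{x},\overline{x}]$) shows $\mathcal{F}$ is condensing there; as $[\underline{x},\overline{x}]$ is nonempty, bounded, convex and closed in $PC_{1-\lambda}(J,E)$, Lemma~\ref{lem:Sadovskii} delivers $x^{*}\in[\underline{x},\overline{x}]$ with $\mathcal{G}(x^{*},x^{*})=x^{*}$, which by Definition~\ref{Hilfer-mild-solution} is a mild solution of~\eqref{eqn:Mixed-impul-Hilfer}. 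I expect the equicontinuity step to be the main obstacle: proving uniform continuity of the image sets in the $PC_{1-\lambda}$-norm across both the singular kernel and the impulse weights takes some care, and, secondarily, the bookkeeping of constants needed to make the $\alpha$-coefficient match the explicit expression in~\eqref{Mixed-Mk cdn} is fiddly.
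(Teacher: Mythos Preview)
Your overall strategy is sound and would work, but it diverges from the paper's proof in two places. First, to obtain the coupled extremal $L$-quasi solutions $\underline{x},\overline{x}$, the paper does not redo a precompactness argument with the condensing coefficient from~\eqref{Mixed-Mk cdn}; it simply observes that $A(3^{*})\Rightarrow A(3)$ and then invokes Theorem~\ref{Mixed-Th-1} verbatim, which already delivers $\underline{x},\overline{x}$ via the Gronwall route of Step~3 there. Your alternative---bounding $\overline{\psi}$ by a constant times itself and forcing $\overline{\psi}=0$ from~\eqref{Mixed-Mk cdn}---is correct and self-contained, but it duplicates work the paper avoids by citation. Second, for the genuine mild solution the paper applies Sadovskii to $\mathcal{F}=\mathcal{G}(\cdot,\cdot)$ on the larger interval $[y_{0},z_{0}]$ rather than on $[\underline{x},\overline{x}]$: it proves equicontinuity of $\mathcal{F}([y_{0},z_{0}])$ exactly as you sketch, then uses Lemma~\ref{lem:Mixed lem-1} (the countable-subset doubling) together with Lemma~\ref{lem:Mixed-lem-3} and $A(2^{*}),A(3^{*})$ to get $\alpha(\mathcal{F}(D))\le\eta\,\alpha(D)$ with $\eta=4M^{*}\big(\sum_{i}M_{i}T^{\lambda-1}/\Gamma(\lambda)+(2L_{1}+C)T^{\mu}/\Gamma(\mu+1)\big)$. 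The paper then treats the case $\eta\ge1$ separately by partitioning $[0,T]$ into short non-impulsive subintervals and concatenating local mild solutions; you instead read~\eqref{Mixed-Mk cdn} as forcing $\eta<1$ outright. Finally, the paper traps the Sadovskii fixed point between $\underline{x}$ and $\overline{x}$ a posteriori via $y_{p}\le x\le z_{p}$ and $p\to\infty$, whereas you build this in by restricting $\mathcal{F}$ to $[\underline{x},\overline{x}]$ from the start. Both routes are valid; the paper's is shorter on the extremal-solution side, yours is tidier on the fixed-point side.
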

\begin{proof}

It can be verified that the assumption $A(3^{*})\Rightarrow A(3)$. Hence by the theorem \eqref{Mixed-Th-1}, the impulsive system \eqref{eqn:Mixed-impul-Hilfer} has minimal and maximal $L$-quasi lower $\underline{x}$ and upper $\overline{x}$ solutions in $[y_{0},z_{0}]$. From the normality definition of the cone $P$, there exists $\tilde{M}>0$ such that
\begin{align}
\label{eqn:equi-inequality}
\|g(t,y(t),z(t))+(c+L)y(t)-Lz(t)\|\leq \tilde{M}.
\end{align}
The proof of the theorem terminates in finding at least one mild solution in $[y_{0},z_{0}]$. First, let the operator $\mathcal{F}$ be defined as $\mathcal{F}:[y_{0},z_{0}]\rightarrow [y_{0},z_{0}]$ such that $\mathcal{F}x=\mathcal{G}(x,x)$. It is evident that $\mathcal{F}$ is continuous and the fixed point of the operator $\mathcal{F}$ is equivalent to the mild solution of the system \eqref{eqn:Mixed-impul-Hilfer}.

\vspace{3mm} For the case for which $t\in J_{0}^{'}$:-

 Let $s_{1},s_{2}\in[0,t_{1}]$ such that $0<s_{1}<s_{2}\leq t_{1}$. The following inequality determines the equicontinuous of the operator $\mathcal{F}$.
\begin{align*}
\Big\|s_{2}^{1-\lambda}&(\mathcal{F}x)(s_{2})-s_{1}^{1-\lambda}(\mathcal{F}x)(s_{2})\Big\|\leq \Big\|s_{2}^{1-\lambda}\mathcal{G}(x,x)(s_{2})-s_{1}^{1-\lambda}\mathcal{G}(x,x)(s_{2})\Big\|\\
&\leq\Big\|s_{2}^{1-\lambda}S_{\mu,\nu}^{*}(s_{2})x_{0}-s_{1}^{1-\lambda}S_{\mu,\nu}^{*}(s_{1})x_{0}\Big\|+\Big\|\int_{0}^{s_{2}}s_{2}^{1-\lambda}(s_{2}-s)^{\mu-1}\\
&P^{*}_{\mu}(s_{2}-s)\big[g(s,x(s),x(s))+(C+L)x(s)-Lx(s)\big]ds\Big\|\\
&-\Big\|\int_{0}^{s_{1}}s_{1}^{1-\lambda}(s_{1}-s)^{\mu-1}P^{*}_{\mu}(s_{1}-s)\big[g(s,x(s),x(s))\\
&+(C+L)x(s)-Lx(s)\big]ds\Big\|
\end{align*}
For convenience let $g(s,x(s),x(s))+Cx(s)$ be denoted by $\zeta(s)$.
\begin{align*}
\Big\|s_{2}^{1-\lambda}&(\mathcal{F}x)(s_{2})-s_{1}^{1-\lambda}(\mathcal{F}x)(s_{2})\Big\|\leq \Bigg(\Big\|s_{2}^{1-\lambda}S_{\mu,\nu}^{*}(s_{2})x_{0}-s_{2}^{1-\lambda}S_{\mu,\nu}^{*}(s_{1})x_{0}\Big\|\\
&+\Big\|s_{2}^{1-\lambda}S_{\mu,\nu}^{*}(s_{1})x_{0}-s_{1}^{1-\lambda}S_{\mu,\nu}^{*}(s_{1})x_{0}\Big\|\Bigg)\\
&+\Big\|s_{2}^{1-\lambda}\int_{s_{1}}^{s_{2}}(s_{2}-s)^{\mu-1}P_{\mu}^{*}(s_{2}-s)\big[\zeta(s)\big]ds\Big\|\\
&+\Big\|\int_{0}^{s_{1}}\big(s_{2}^{1-\lambda}(s_{2}-s)^{\mu-1}-s_{1}^{1-\lambda}(s_{1}-s)^{\mu-1}\big)P_{\mu}^{*}(s_{2}-s)\big[\zeta(s))\big]ds\Big\|\\
&+\Big\|s_{1}^{1-\lambda}\int_{0}^{s_{1}}(s_{1}-s)^{\mu-1}\big(P_{\mu}^{*}(s_{2}-s)-P_{\mu}^{*}(s_{1}-s)\big)\big[\zeta(s)\big]ds\Big\|.\\
&= \displaystyle\sum_{i=1}^{5}\|I_{i}\|.
\end{align*}
Now for $i=1,2,\ldots,5$, $I_{i}$ can be calculated individually as below.
For $I_{1}$, using Remark \eqref{rem:Mixed-Remark-alternate}, it can be observed that
\begin{align*}
I_{1}&=\Big\|s_{2}^{1-\lambda}S_{\mu,\nu}^{*}(s_{2})x_{0}-s_{2}^{1-\lambda}S_{\mu,\nu}^{*}(s_{1})x_{0}\Big\|\\
&\leq\Big\|s_{2}^{1-\lambda}\big(S_{\mu,\nu}^{*}(s_{2})-S_{\mu,\nu}^{*}(s_{1})\big)\Big\|\|x_{0}\|\\
&\longrightarrow 0, \enspace \mbox{as}\enspace s_{2}\rightarrow s_{1}.
\end{align*}
For $I_{2}$, using Remark \eqref{rem:Mixed-Remark-alternate}, the following observation can be made similar to $I_{1}$.
\begin{align*}
I_{2}&=\Big\|s_{2}^{1-\lambda}S_{\mu,\nu}^{*}(s_{1})x_{0}-s_{1}^{1-\lambda}S_{\mu,\nu}^{*}(s_{1})x_{0}\Big\|\\
&\leq \dfrac{M^{*}T^{\lambda-1}}{\Gamma(\lambda)}\|s_{2}^{1-\lambda}-s_{1}^{1-\lambda}\|\|x_{0}\|
=\dfrac{M^{*}T^{\lambda-1}}{\Gamma(\lambda)}\|\big(s_{2}-s_{1}\big)^{1-\lambda}\|\|x_{0}\|\\
&\longrightarrow 0, \enspace \mbox{as}\enspace s_{2}\rightarrow s_{1}.
\end{align*}
$I_{3}$ can be evaluated using Remark \eqref{rem:Mixed-Remark-alternate} as below.
\begin{align*}
I_{3}&=\Big\|s_{2}^{1-\lambda}\int_{s_{1}}^{s_{2}}(s_{2}-s)^{\mu-1}P_{\mu}^{*}(s_{2}-s)\big[\zeta(s)\big]ds\Big\|\\
&\leq \dfrac{M^{*}\tilde{M}}{\Gamma(\mu)}\Big\|\int_{s_{1}}^{s_{2}}(s_{2}-s)^{\mu-1}ds\Big\|\\
&\longrightarrow 0, \enspace \mbox{as}\enspace s_{2}\rightarrow s_{1}.
\end{align*}
$I_{4}$ is evaluated using Remark \eqref{rem:Mixed-Remark-alternate} and Equation \eqref{eqn:equi-inequality}, as below.
\begin{align*}
I_{4}=&\Big\|\int_{0}^{s_{1}}\big(s_{2}^{1-\lambda}(s_{2}-s)^{\mu-1}-s_{1}^{1-\lambda}(s_{1}-s)^{\mu-1}\big)P_{\mu}^{*}(s_{2}-s)\big[\zeta(s)\big]ds\Big\|\\
&\Rightarrow I_{4}\leq \dfrac{M^{*}\tilde{M}}{\Gamma(\mu)}\Big\|\int_{0}^{s_{1}}\big(s_{2}^{1-\lambda}(s_{2}-s)^{\mu-1}-s_{1}^{1-\lambda}(s_{1}-s)^{\mu-1}\big)ds\Big\|\\
&\longrightarrow 0, \enspace \mbox{as}\enspace s_{2}\rightarrow s_{1}.
\end{align*}
Similarly for $\epsilon \in (0,s_{1})$, $I_{5}$ can be evaluated as below.
\begin{align*}
I_{5}&=\Big\|\int_{0}^{s_{1}-\epsilon}s_{1}^{1-\lambda}(s_{1}-s)^{\mu-1}\big(P_{\mu}^{*}(s_{2}-s)-P_{\mu}^{*}(s_{1}-s)\big)\big[\zeta(s)\big]ds\Big\|\\
&+\Big\|\int_{s_{1}-\epsilon}^{s_{1}}s_{1}^{1-\lambda}(s_{1}-s)^{\mu-1}\big(P_{\mu}^{*}(s_{2}-s)-P_{\mu}^{*}(s_{1}-s)\big)\big[\zeta(s)\big]ds\Big\|
\end{align*}
\begin{align*}
\Rightarrow I_{5}&\leq \tilde{M}\int_{0}^{s_{1}-\epsilon}s_{1}^{1-\lambda}(s_{1}-s)^{\mu-1}\sup_{s\in[0,s-\epsilon]}\Big\|\big(P_{\mu}^{*}(s_{2}-s)-P_{\mu}^{*}(s_{1}-s)\big)\Big\|ds\\
&+\dfrac{2\tilde{M} M^{*}}{\Gamma(\mu)}\int_{s_{1}-\epsilon}^{s_{1}}s_{1}^{1-\lambda}(s_{1}-s)^{\mu-1}ds.\\
&\leq \tilde{M}\int_{0}^{s_{1}-\epsilon}s_{1}^{1-\lambda}s^{\mu-1}\sup_{s\in[0,s-\epsilon]}\Big\|\big(P_{\mu}^{*}(s_{2}+s-s_{1})-P_{\mu}^{*}(s)\big)\Big\|ds\\
&+\dfrac{2\tilde{M} M^{*}t_{1}^{1-\lambda}\epsilon^{\mu}}{\Gamma(\mu+1)}\\
&\longrightarrow 0, \enspace \mbox{as}\enspace \epsilon\rightarrow 0\enspace \mbox{and} \enspace s_{2}\rightarrow s_{1}.
\end{align*}
Thus the following conclusion can be drawn for $J_{0}^{'}$.
\begin{align*}
\implies &\Big\|s_{2}^{1-\lambda}(\mathcal{F}x)(s_{2})-s_{1}^{1-\lambda}(\mathcal{F}x)(s_{2})\Big\|\longrightarrow 0.
\end{align*}

For $J_{k}^{'}=(t_{k},t_{k+1}]$, let $s_{1},s_{2}\in (t_{k},t_{k+1}]$ such that $t_{k}<s_{1}<s_{2}\leq t_{k+1}$, for which the following equality is evaluated.
\begin{align*}
\Big\|(s_{2}-t_{k})&^{1-\lambda}(\mathcal{F}x)(s_{2})-(s_{1}-t_{k})^{1-\lambda}(\mathcal{F}x)(s_{2})\Big\|\\
&=\Big\|(s_{2}-t_{k})^{1-\lambda}\mathcal{G}(x,x)(s_{2})-(s_{1}-t_{k})^{1-\lambda}\mathcal{G}(x,x)(s_{2})\Big\|.
\end{align*}
Calculations similar to $J_{0}^{'}$ are performed to obtain the following observation.
\begin{align*}
\Big\|(s_{2}-t_{k})^{1-\lambda}\mathcal{G}(x,x)(s_{2})-(s_{1}-t_{k})^{1-\lambda}\mathcal{G}(x,x)(s_{2})\Big\|\rightarrow 0, \enspace \mbox{as}\enspace s_{2}\rightarrow s_{1}\\
\implies \Big\|\mathcal{G}(x,x)(s_{2})-\mathcal{G}(x,x)(s_{2})\Big\|\rightarrow 0, \enspace \mbox{as}\enspace s_{2}\rightarrow s_{1}.
\end{align*}
Consequently, $\Big\|(\mathcal{F}x)(s_{2})-(\mathcal{F}x)(s_{2})\Big\|\rightarrow 0$ independently of $x\in [y_{0},z_{0}]$ as $s_{2}\rightarrow s_{1}$, which implies that
$(\mathcal{F}x):[y_{0},z_{0}]\rightarrow [y_{0},z_{0}]$ is equicontinuous. In this regard, for any $D\subset [y_{0},z_{0}]$, $\mathcal{F}(D)\subset [y_{0},z_{0}]$ is bounded and equicontinuous. By Lemma \ref{lem:Mixed lem-1} it is evident that there exists a countable set $D_{0}=\{x_{p}\}\subset D$, such that
\begin{align*}
\alpha(\mathcal{F}(D))\leq 2 \alpha (\mathcal{F}(D_{0})).
\end{align*}
From Lemma \ref{lem:Mixed-lem-3}, it can be observed that
\begin{align*}
\alpha(\mathcal{F}(D_{0}))=\max_{t\in J}\alpha(\mathcal{F}(D_{0})(t)).
\end{align*}
For $t\in J_{0}^{'}$, by Lemma \ref{lem:Mixed-cgt}, Equation \eqref{eqn:Mixed-mild-alternate}, and from the assumption $A(3^{*})$, the following inequality is evaluated.
\begin{align*}
\alpha(\mathcal{F}(D_{0})(t))=&\alpha \Big(\Big\{S^{*}_{\mu,\nu}(t)x_{0}+\int_{0}^{t}(t-s)^{\mu-1}P^{*}_{\mu}(t-s)\Big[g\big(s,x_{p}(s),x_{p}(s)\big)
\\
&+Cx_{p}(s)\Big]ds\Big\}\Big)\\
\leq &\alpha \Big(\Big\{S^{*}_{\mu,\nu}(t)x_{0}\Big\}\Big)+\dfrac{2M^{*}}{\Gamma(\mu)}\int_{0}^{t}(t-s)^{\mu-1}\alpha\Big(\Big\{g\big(s,D_{0}(s),D_{0}(s)\big)\\
&+CD_{0}(s)\Big\}\Big)ds\\
\leq &\dfrac{2M^{*}(2L_{1}+C)}{\Gamma(\mu)}\int_{0}^{t}(t-s)^{\mu-1}\alpha\Big(D_{0}(s)\Big)ds.\\
\leq & \dfrac{2M^{*}(2L_{1}+C)T^{\mu}}{\Gamma(\mu+1)}\alpha(D).
\end{align*}
For the case $J_{1}^{'}$, $t\in (t_{1},t_{2}]$, the inequality is evaluated as below using the Lemma \ref{lem:Mixed-cgt}, Equation \eqref{eqn:Mixed-mild-alternate}, and the assumptions $A(2^{*})$ and $A(3^{*})$
\begin{align*}
\alpha(\mathcal{F}(D_{0})(t))=&\alpha \Big(\Big\{S^{*}_{\mu,\nu}(t)x_{0}+\displaystyle S^{*}_{\mu,\nu}(t-t_{1})\phi_{1}(x_{p}(t_{1}),x_{p}(t_{1}))\\
  +&\int_{0}^{t}(t-s)^{\mu-1}P^{*}_{\mu}(t-s)\Big[g(s,x_{p}(s),x_{p}(s))+Cx_{p}(s)\Big]ds\Big\}\Big)\\
  \leq &\alpha \Big(\Big\{S^{*}_{\mu,\nu}(t)x_{0}\Big\}\Big)+\dfrac{M^{*}T^{\lambda-1}}{\Gamma(\lambda)}\alpha \Big(\Big\{\phi_{1}(D_{0}(t_{1}),D_{0}(t_{1}))\Big\}\Big)\\
  +&\dfrac{2M^{*}(2L_{1}+C)}{\Gamma(\mu)}\int_{0}^{t}(t-s)^{\mu-1}\alpha\Big(D_{0}(s)\Big)ds\\
  \leq &2M^{*}\Big(\dfrac{M_{1}T^{\lambda-1}}{\Gamma(\lambda)}+\dfrac{(2L_{1}+C)T^{\mu}}{\Gamma(\mu+1)}\Big)\alpha(D).
\end{align*}
Now for a general case, that is for $J_{k}^{'}$ , $t\in (t_{k},t_{k+1}]$, $k=1,2,\ldots,l$ the inequality is calculated below using the Lemma \ref{lem:Mixed-cgt}, Equation \eqref{eqn:Mixed-mild-alternate}, and the assumptions $A(2^{*})$ and $A(3^{*})$.
\begin{align*}
\alpha(\mathcal{F}(D_{0})(t))=&\alpha \Big(\Big\{S^{*}_{\mu,\nu}(t)x_{0}+\displaystyle\sum_{i=1}^{k} S^{*}_{\mu,\nu}(t-t_{i})\phi_{i}(x_{p}(t_{i}),x_{p}(t_{i}))\\
  +&\int_{0}^{t}(t-s)^{\mu-1}P^{*}_{\mu}(t-s)\Big[g(s,x_{p}(s),x_{p}(s))+Cx_{p}(s)\Big]ds\Big\}\Big)\\
  \leq &\alpha \Big(\Big\{S^{*}_{\mu,\nu}(t)x_{0}\Big\}\Big)+\dfrac{M^{*}T^{\lambda-1}}{\Gamma(\lambda)}\alpha \Big(\Big\{\sum_{i=1}^{k}\phi_{1}(D_{0}(t_{i}),D_{0}(t_{i}))\Big\}\Big)\\
  +&\dfrac{2M^{*}(2L_{1}+C)}{\Gamma(\mu)}\int_{0}^{t}(t-s)^{\mu-1}\alpha\Big(D_{0}(s)\Big)ds\\
  \leq &2M^{*}\Big(\dfrac{\sum_{i=1}^{k}M_{i}T^{\lambda-1}}{\Gamma(\lambda)}+\dfrac{(2L_{1}+C)T^{\mu}}{\Gamma(\mu+1)}\Big)\alpha(D).
\end{align*}
By Lemma \ref{lem:Mixed-lem-3}, since $\mathcal{F}(D_{0})$ is bounded and equicontinuous, the above inequality results in,
\begin{align*}
\alpha(\mathcal{F}(D_{0})(t))\leq &4M^{*}\Big(\dfrac{\sum_{i=1}^{k}M_{i}T^{\lambda-1}}{\Gamma(\lambda)}+\dfrac{(2L_{1}+C)T^{\mu}}{\Gamma(\mu+1)}\Big)\alpha(D)\leq \alpha(D).
\end{align*}
Let $4M^{*}\Big(\dfrac{\sum_{i=1}^{k}M_{i}T^{\lambda-1}}{\Gamma(\lambda)}+\dfrac{(2L_{1}+C)T^{\mu}}{\Gamma(\mu+1)}\Big)=\eta$
\begin{enumerate}
\item
If $\eta<1 $, then the operator $\mathcal{F}:[y_{0},z_{0}]\rightarrow [y_{0},z_{0}]$ is condensing according to Lemma \ref{lem:Sadovskii}. Hence $\mathcal{F}$ has a fixed point $x\in [y_{0},z_{0}]$.
\item
If $\eta\geq 1$, that is, when $\eta$ jumps at the impulsive points, it is necessary to divide the interval $[0,T]$ into $n$ parts such that $\Delta_{n}=0=\tilde{t}_{0}<\tilde{t}_{1}<\ldots<\tilde{t}_{n}=T$. Here the points $\tilde{t}_{0},\tilde{t}_{1},\ldots,\tilde{t}_{n}$ are not the impulse points such that the below condition holds.
\begin{align*}
4M^{*}\Big(\dfrac{\sum_{i=1}^{k}M_{i}\|\Delta_{n}\|^{\lambda-1}}{\Gamma(\lambda)}+\dfrac{(2L_{1}+C)\|\Delta_{n}\|^{\mu}}{\Gamma(\mu+1)}\Big).
\end{align*}
\end{enumerate}
In the interval $[0,\tilde{t}_{1}]$, according to the above statements $(i)$ and $(ii)$, there exists a mild solution $x_{1}(t)\in[0,\tilde{t}_{1}]$. Now in the interval
$[\tilde{t}_{1},\tilde{t}_{2}]$ with initial condition $x(\tilde{t}_{1})=x_{1}(\tilde{t}_{1})$, it has a mild solution $x_{2}(t)\in [\tilde{t}_{1},\tilde{t}_{2}]$. Thus, the mild solution of the equation is extended from $[0,\tilde{t}_{1}]$ to $[0,\tilde{t}_{2}]$. Subsequently, continuing this process, the mild solution of the equation is extended to $[0,T]$. Thus the impulsive system \eqref{eqn:Mixed-impul-Hilfer} has a mild solution $x\in PC_{1-\lambda}(J,E)$ that satisfies $x(t)=x_{i}(t)$ such that $\tilde{t}_{i-1}\leq t \leq \tilde{t}_{i}$, for $i=1,2,\ldots,n$.

Since $x=\mathcal{F}x=\mathcal{G}(x,x)$ for $y_{0}\leq x \leq z_{0}$, with respect to the mixed monotone property, the conclusion can be drawn as  $y_{1}=\mathcal{G}(y_{0},z_{0})\leq \mathcal{G}(x,x) \leq \mathcal{G}(z_{0},y_{0})=z_{1}$. In a similar way, it is true for $y_{2}\leq x \leq z_{2}$ and in general, $y_{p}\leq x\leq z_{p}$. It is clear that, letting $p\rightarrow \infty$ reduces to $\underline{x}\leq x\leq \overline{x}$. Hence it can be concluded that the impulsive system \eqref{eqn:Mixed-impul-Hilfer} has at least one mild solution between $\underline{x}$ and $\overline{x}$.
 \end{proof}
\begin{corollary}
\label{Coro:Mixed-leq}
In an ordered Banach space $E$, let $N$ be the positive cone with normal constant $\tilde{N}$. With the assumption that the operator $Q(t)$ is positive for $t\in J$, if the assumptions $A(1)$ and $A(2)$ are satisfied combined with the condition given below, then the condition $A(3)$ is automatically true.

\noindent
{\bf$A(5).$} There exists a constant $C^{*}$ and $L^{*}$ such that
\begin{align*}
g(t,y_{2},z_{2})-g(t,y_{1},z_{1})\leq C^{*}(y_{2}-y_{1})+L^{*}(z_{1}-z_{2})
\end{align*}
and $y_{0}(t)\leq y_{1}(t)\leq y_{2}(t)\leq z_{0}(t)$, $y_{0}(t)\leq z_{2}(t)\leq z_{1}(t)\leq z_{0}(t)$ for any $t\in J$.
\end{corollary}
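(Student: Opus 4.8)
\noindent\emph{Proof proposal.}
What has to be verified is assumption $A(3)$, so the plan is to fix $t\in J$, take an increasing sequence $\{y_p\}$ and a decreasing sequence $\{z_p\}$ in $[y_0(t),z_0(t)]$, and bound $\alpha(\{g(t,y_p,z_p)\})$ in terms of $\alpha(\{y_p\})$ and $\alpha(\{z_p\})$. The key device is to absorb the nonlinearity into two \emph{monotone} auxiliary sequences. I would put
\[
a_p:=g(t,y_p,z_p)+Cy_p-Lz_p,\qquad b_p:=-g(t,y_p,z_p)+C^{*}y_p-L^{*}z_p .
\]
For $q\ge p$ one has $y_p\le y_q$ and $z_q\le z_p$, so the lower estimate $A(1)$ gives $a_q-a_p\ge\theta$ and the upper estimate $A(5)$ gives $b_q-b_p\ge\theta$; hence $\{a_p\}$ and $\{b_p\}$ are increasing, and their sum $c_p:=a_p+b_p=(C+C^{*})y_p-(L+L^{*})z_p$ is a linear combination of the given monotone sequences, whence $\alpha(\{c_p\})\le|C+C^{*}|\,\alpha(\{y_p\})+|L+L^{*}|\,\alpha(\{z_p\})$ by homogeneity and subadditivity of $\alpha$.

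The second step transfers this order domination to the measure of noncompactness. For $q\ge p$ one has the sandwich $\theta\le a_q-a_p\le (a_q-a_p)+(b_q-b_p)=c_q-c_p$ and likewise $\theta\le b_q-b_p\le c_q-c_p$, so normality of the cone $N$ yields $\|a_q-a_p\|\le\tilde N\|c_q-c_p\|$ and $\|b_q-b_p\|\le\tilde N\|c_q-c_p\|$ for all indices. A short covering argument then gives $\alpha(\{a_p\})\le\tilde N\alpha(\{c_p\})$ and $\alpha(\{b_p\})\le\tilde N\alpha(\{c_p\})$: cover $\{c_p\}$ by finitely many pieces of diameter below $\alpha(\{c_p\})+\varepsilon$, observe that the corresponding pieces of $\{a_p\}$ (resp. $\{b_p\}$) then have diameter below $\tilde N(\alpha(\{c_p\})+\varepsilon)$, and let $\varepsilon\to0$ (this step may instead be quoted from the measure-of-noncompactness toolkit already cited). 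Finally, writing $g(t,y_p,z_p)=\tfrac12(a_p-b_p)+\tfrac12(C^{*}-C)y_p+\tfrac12(L-L^{*})z_p$ and combining the three estimates produces $\alpha(\{g(t,y_p,z_p)\})\le L_1\big(\alpha(\{y_p\})+\alpha(\{z_p\})\big)$ with an explicit constant $L_1\ge0$ assembled from $\tilde N,C,L,C^{*},L^{*}$, which is exactly $A(3)$.

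The remaining points I would dispose of quickly are boundedness of the sets to which $\alpha$ is applied and sign bookkeeping. Since $y_p,z_p$ stay in the fixed order interval $[y_0(t),z_0(t)]$, the sequence $\{c_p\}$ is order-bounded, and $\{a_p\},\{b_p\}$ are increasing and order-bounded above by $c_p-b_1$ resp.\ $c_p-a_1$; normality then makes all of them norm-bounded and forces $\{g(t,y_p,z_p)\}$ bounded, so every $\alpha(\cdot)$ above is legitimate. The one genuinely delicate matter is to track the signs of $C,L,C^{*},L^{*}$ (using $C\ge0$ from $A(1)$ and keeping absolute values in the $\alpha(\{c_p\})$ bound) so that the monotonicity of $\{a_p\},\{b_p\}$ is correctly oriented and $L_1$ comes out nonnegative; the covering lemma connecting $\|a_q-a_p\|\le\tilde N\|c_q-c_p\|$ to the $\alpha$-inequality is the only non-bookkeeping ingredient, and it is where I would be most careful.
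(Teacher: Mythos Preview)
Your argument is correct and rests on the same core device as the paper's proof: sandwich the increment of $g$ between $\theta$ (via $A(1)$) and a linear combination of the $y$- and $z$-increments (via $A(5)$), invoke normality of $N$ to convert the order sandwich into a norm bound, and then pass to the Kuratowski measure. The paper does this more directly: for $p\le q$ it writes
\[
\theta \le g(t,y_q,z_q)-g(t,y_p,z_p)+C(y_q-y_p)+L(z_p-z_q)\le (C^{*}+C)(y_q-y_p)+(L^{*}+L)(z_p-z_q),
\]
applies normality and the triangle inequality to get $\|g(t,y_q,z_q)-g(t,y_p,z_p)\|\le(\tilde NC^{*}+\tilde NC+C)\|y_q-y_p\|+(\tilde NL^{*}+\tilde NL+L)\|z_p-z_q\|$, and then simply asserts the $\alpha$-inequality with $L_1=\tilde N(C^{*}+C+L^{*}+L)+C+L$. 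Your route through the auxiliary monotone sequences $a_p,b_p,c_p$ is a repackaging of exactly these inequalities, but it buys you something the paper glosses over: the covering argument you sketch is precisely what justifies the passage from the pairwise bound $\|a_q-a_p\|\le\tilde N\|c_q-c_p\|$ (valid because the indices are totally ordered) to $\alpha(\{a_p\})\le\tilde N\,\alpha(\{c_p\})$, a step the paper dispatches in one line. So the two proofs coincide in substance; yours is a bit longer but more explicit at the one point where care is genuinely needed.
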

\begin{proof}
Let $\{y_{p}\}$,$\{y_{q}\}$ and $\{z_{p}\}$,$\{z_{q}\}$ be two set of increasing sequences such that
$$
\{y_{p}\},\{y_{q}\},\{z_{p}\},\{z_{q}\} \subset [y_{0}(t),z_{0}(t)],
$$
for $t\in J$ and $p\leq q$.
By the condition $A(1)$ and $A(5)$,
\begin{align*}
\theta \leq g(t,y_{q},z_{q})&-g(t,y_{p},z_{p})+C(y_{q}-y_{p})+L(z_{p}-z_{q})\\
&\leq (C^{*}+C)(y_{q}-y_{p})+(L^{*}+L)(z_{p}-z_{q}).
\end{align*}
From the definition of the normal cone with the normality constant $\tilde{N}$ of the positive cone $N$, the equation further reduces to,
\begin{align*}
\|g(t,y_{q},z_{q})&-g(t,y_{p},z_{p})+C(y_{q}-y_{p})+L(z_{p}-z_{q})\| \\
&\leq \tilde{N}\big((C^{*}+C)(y_{q}-y_{p})+(L^{*}+L)(z_{p}-z_{q})\big).\\
\Rightarrow\|g(t,y_{q},z_{q})&-g(t,y_{p},z_{p})\|\\
&\leq (\tilde{N}C^{*}+\tilde{N}C+C)\|y_{q}-y_{p}\|+(\tilde{N}L^{*}+\tilde{N}L+L)\|z_{p}-z_{q}\|.
\end{align*}
Let $\displaystyle L_{1}=\tilde{N}(C^{*}+C+L^{*}+L)+C+L$. By the definition of measure of non-compactness the above equation reduces to,
\begin{align*}
\alpha \Big(\{g(t,y_{p},z_{p})\}\Big)\leq L_{1}\Big( \alpha \big(\{y_{p}\}\big)+\alpha \big(\{z_{p}\}\big)\Big),\enspace p=1,2,\ldots,.
\end{align*}
Thus the condition $A(3)$ is reduced.
\end{proof}
\begin{theorem}
\label{Mixed:Th-3}
An impulsive fractional system {\rm{(\ref{eqn:Mixed-impul-Hilfer})}} is said to have an unique mild solution that lie between $[y_{0},z_{0}]$, where $y_{0}\in PC_{1-\lambda}$ and $z_{0}\in PC_{1-\lambda}$ are the coupled $L$-quasi lower and upper solution with $y_{0}\leq z_{0}$, if the conditions $A(1)$, $A(2)$, $A(4)$ and $A(5)$ holds.
\end{theorem}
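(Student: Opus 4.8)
The plan is to obtain existence from results already established and then upgrade it to uniqueness by squeezing the maximal and minimal solutions together. First I would note that Corollary~\ref{Coro:Mixed-leq} shows that $A(1)$, $A(2)$ and $A(5)$ imply $A(3)$; since $A(4)$ is assumed as well, Theorem~\ref{Mixed-Th-1} applies and produces a minimal mild solution $\underline{x}$ and a maximal mild solution $\overline{x}$ of \eqref{eqn:Mixed-impul-Hilfer} in $[y_{0},z_{0}]$, realized as the monotone limits of the iterates \eqref{eqn:Mixed-iterative-pattern} and satisfying $\underline{x}=\mathcal{G}(\underline{x},\overline{x})$, $\overline{x}=\mathcal{G}(\overline{x},\underline{x})$ together with $y_{0}\le\underline{x}\le\overline{x}\le z_{0}$. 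Just as in the proof of Theorem~\ref{Mixed-Th-1}, any mild solution $x\in[y_{0},z_{0}]$ obeys $y_{p}\le x\le z_{p}$ for all $p$, hence $\underline{x}\le x\le\overline{x}$; consequently it suffices to prove $\underline{x}=\overline{x}$.

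Put $w:=\overline{x}-\underline{x}\ge\theta$. On $J_{0}'=[0,t_{1}]$, subtracting the two integral representations coming from \eqref{eqn:Mixed-mild-alternate} makes the terms $S^{*}_{\mu,\nu}(t)x_{0}$ cancel and leaves
\begin{align*}
w(t)=\int_{0}^{t}(t-s)^{\mu-1}P^{*}_{\mu}(t-s)\Big[g(s,\overline{x}(s),\underline{x}(s))-g(s,\underline{x}(s),\overline{x}(s))+(C+2L)w(s)\Big]ds.
\end{align*}
Since $\underline{x}\le\overline{x}$, the lower estimate of $A(1)$ and the upper estimate of $A(5)$ applied to $g(s,\overline{x}(s),\underline{x}(s))-g(s,\underline{x}(s),\overline{x}(s))$ confine the bracketed term, say $q(s)$, between two scalar multiples of $w(s)$ in the ordering of $E$; subtracting, if needed, an appropriate multiple of $w(s)$ and invoking normality of $N$ with constant $\tilde{N}$, this produces a constant $K\ge0$, depending only on $C,L,C^{*},L^{*}$ and $\tilde{N}$, with $\|q(s)\|\le K\|w(s)\|$. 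Combined with $\|P^{*}_{\mu}(t-s)\|\le M^{*}/\Gamma(\mu)$ from Remark~\ref{rem:Mixed-Remark-alternate} this gives
\begin{align*}
\|w(t)\|\le\frac{M^{*}K}{\Gamma(\mu)}\int_{0}^{t}(t-s)^{\mu-1}\|w(s)\|\,ds,\qquad t\in[0,t_{1}].
\end{align*}
As $w\in PC_{1-\lambda}(J,E)$, the map $s\mapsto\|w(s)\|$ is locally integrable on $[0,t_{1}]$, so Lemma~\ref{lem:Gronwall} with $a\equiv0$, $b=M^{*}K/\Gamma(\mu)$ and $\beta=\mu$ forces $w\equiv\theta$ on $[0,t_{1}]$.

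The argument is then propagated interval by interval. Assuming $w\equiv\theta$ on $[0,t_{k}]$ so that $\underline{x}$ and $\overline{x}$ coincide at each $t_{i}$ with $i\le k$, the difference of the representations \eqref{eqn:Mixed-mild-alternate} on $J_{k}'=(t_{k},t_{k+1}]$ has vanishing jump terms $S^{*}_{\mu,\nu}(t-t_{i})\big[\phi_{i}(\overline{x}(t_{i}),\underline{x}(t_{i}))-\phi_{i}(\underline{x}(t_{i}),\overline{x}(t_{i}))\big]$ and $q(s)=\theta$ for $s\le t_{k}$, so the same estimate followed by Lemma~\ref{lem:Gronwall} (after translating the time origin to $t_{k}$) yields $w\equiv\theta$ on $(t_{k},t_{k+1}]$. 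After $l+1$ such steps $w\equiv\theta$ on all of $J$, i.e. $\underline{x}=\overline{x}$; together with $\underline{x}\le x\le\overline{x}$ for an arbitrary mild solution $x\in[y_{0},z_{0}]$ this shows that \eqref{eqn:Mixed-impul-Hilfer} has a unique mild solution in $[y_{0},z_{0}]$. I expect the only genuinely delicate point to be the estimate $\|q(s)\|\le K\|w(s)\|$: turning the order relations furnished by $A(1)$ and $A(5)$ into a norm bound through normality while keeping track of the $PC_{1-\lambda}$ weight so that Lemma~\ref{lem:Gronwall} applies near every impulse point; everything else merely reprises the estimates already used in Theorems~\ref{Mixed-Th-1} and~\ref{Mixed:Th-2}.
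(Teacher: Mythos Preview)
Your proposal is correct and follows essentially the same route as the paper: subtract the two integral representations $\mathcal{G}(\overline{x},\underline{x})-\mathcal{G}(\underline{x},\overline{x})$, use $A(1)$ and $A(5)$ together with normality of $N$ to bound the integrand by a multiple of $\|\overline{x}-\underline{x}\|$, apply the fractional Gronwall inequality on $J_{0}'$, and then propagate interval by interval since equality at the preceding impulse points kills the $\phi_{i}$-terms. If anything, your write-up is more complete than the paper's, since you explicitly invoke Corollary~\ref{Coro:Mixed-leq} and Theorem~\ref{Mixed-Th-1} to secure existence of $\underline{x},\overline{x}$ before proving uniqueness, and you flag the $PC_{1-\lambda}$ integrability issue near $t_{k}$ that the paper leaves implicit.
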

\begin{proof}
If $\overline{x}$ and $\underline{x}$ are the maximal and the minimal solution of the impulsive system (\ref{eqn:Mixed-impul-Hilfer}), then to prove the uniqueness, it has to be proved that $\overline{x}=\underline{x}$. Let $t\in J_{0}^{'}$. Using (\ref{eqn:Mixed-fixed}) in both the solutions results in,
\begin{eqnarray*}
\theta &\leq &\enspace \overline{x}(t)-\underline{x}(t)=\mathcal{G}(\overline{x},\underline{x})(t)-\mathcal{G}(\underline{x},\overline{x})(t)\\
&=& \int_{0}^{t}(t-s)^{\mu-1}P^{*}_{\mu}(t-s)\big[(g(s,\overline{x}(s),\underline{x}(s))- g(s,\underline{x}(s),\overline{x}(s)))\\
&+&(C+2L)(\overline{x}(s)-\underline{x}(s))\big]ds\\
&\leq & \int_{0}^{t}(t-s)^{\mu-1}P^{*}_{\mu}(t-s)[(C^{*}+L^{*})(\overline{x}(t)-\underline{x}(t))\\
&+&(C+2L)(\overline{x}(t)-\underline{x}(t))]ds.
\end{eqnarray*}
Using the normality of the positive cone $N$, the above inequality reduces to,
$$
\|\overline{x}(t)-\underline{x}(t)\|\leq \frac{\tilde{M}M^{*}(C^{*}+L^{*}+C+2L)}{\Gamma (\mu)}\int_{0}^{t}(t-s)^{\mu-1}\|\overline{x}(t)-\underline{x}(t)\|ds.
$$
By Gronwall inequality, $\|\overline{x}(t)-\underline{x}(t)\|=0$. Which implies $\overline{x}(t)=\underline{x}(t)$.

For every interval $J^{'}_{k}$, as $\phi_{k}(\overline{x}(t_{k}),\underline{x}(t_{k}))=\phi_{k}(\overline{x}(t_{k}),\underline{x}(t_{k}))$, the calculation is similar and it results in $\overline{x}(t)=\underline{x}(t)$ for $t\in J_{k}^{'}$, for $k=1,2,\ldots,l$. The uniqueness is thus proved.
\end{proof}

\section{Observation}
\label{discussion}
This paper is based on finding extremal solution of impulsive system with Hilfer fractional derivative using mixed monotone iterative technique. Theorem \ref{Mixed-Th-1} guarantees the existence of minimal and maximal solution for the considered system and Theorem \ref{Mixed:Th-2} discusses the condition such that there exists atleast one mild solution between the minimal and maximal solution. Finally, Theorem \ref{Mixed:Th-3} ensure the uniqueness of such mild solution. The results are proved considering that the semigroup generated by the operator is a non-compact semigroup and is an equicontinuous semigroup.

 The results can further be extended in studying the case when the semigroup generated is compact and for the case when the coupled upper and lower quasi solution does not exist. Also, this article can lead to the study of impulsive system with nonlocal conditions. It is to be noted that the Theorem \ref{Mixed-Th-1} is true for the case when the cone $N$ which is normal is replaced with positive cone which is regular. For detailed proof {\rm{\cite[Corollary 3.3]{Monotone-frac-impulsive}}} may be referred.

\section{Example}
\label{Example}
An example is provided in this section which illustrates the main results.
\begin{example}
\label{Mixed-Example}
 Let $E=L^{p}(\Lambda)$ for $1<p<\infty$ be generated by a positive cone $N$ defined as $N=\{x\in L^{p}(\Lambda):x(y)\geq \theta, \enspace \mbox{a.e}\enspace y\in \Lambda\}$, where $\theta$ is the zero element. Here $\Lambda \subset R^\mathbb{N}$, $\mathbb{N}\geq 1$ is a bounded domain with a sufficiently smooth boundary $\partial\Lambda$. An impulsive Hilfer fractional parabolic partial differential equation with above conditions is considered as below.
\begin{align}
\label{eqn:Mixed-Example}
\left\{
  \begin{array}{ll}
   D_{0+}^{\mu,\nu}x(t,w)-\nabla^{2}x(t,w)= g(t,w,x(t,w),x(t,w)), \enspace (t,y) \in J \times \Lambda \\
   \Delta I_{t_{k}}^{(1-\lambda)}x(t_{k})=\phi_{k}(x(w,t_{k}),x(w,t_{k})),\enspace k=1,2,\ldots l, \enspace y\in \Lambda\\
    I_{0+}^{(1-\lambda)}[x(0,w)]= x_{0}
      \end{array}
\right.
\end{align}
where $D_{0+}^{\mu,\nu}$ is the Hilfer fractional derivative with order $0<\mu<1$ and type $0\leq \nu \leq 1$, $t\in [0,T]$, $\nabla^{2}$ is the Laplace operator such that $-Ax=\nabla^{2}x$, $J=[0,T]$ with impulsive points at $t_{k}$ for $k=0,1,\ldots,l$ such that $J^{'}=J/\{0,t_{1},t_{2},\ldots,t_{l}\}$. Let $-A$ generates a equicontinuous analytic semigroup $Q(t)$ for $t\geq0$ and it is defined as $A:D(A)\subset E\rightarrow E$. Here, $D(A)=W^{2,p}\cap W^{1,p}_{0}(\Lambda)$. The continuous function $g$ is defined as $g:J\times \Lambda \times E \times E \rightarrow E$ and the impulsive function is defined as $\phi_{k}:E \times E \rightarrow E$.

Now, the Example \ref{eqn:Mixed-Example} can be given as an abstract form similar to \eqref{eqn:Mixed-impul-Hilfer}.
\begin{theorem}
Let the  Hilfer fractional system given in Example \ref{Mixed-Example} satisfy the following conditions with $x_{0}\geq 0$.

\noindent
{\bf$E(1).$} Let There exists a function $z=z(t,w)\in PC_{1-\lambda}(J,\Lambda)$ such that
\begin{align*}
\left\{
  \begin{array}{ll}
   D_{0+}^{\mu,\nu}z(t,w)-\nabla^{2}z(t,w)\geq g(t,w,z(t,w),z(t,w)), \enspace t \in J,\\
     \Delta I_{t_{k}}^{(1-\lambda)}z(t_{k},w)\geq \phi_{k}(z(t_{k},w),z(t_{k},w)),\enspace k=1,2,\ldots l\\
    I_{0+}^{(1-\lambda)}[z(0,w)]\geq x_{0}.
      \end{array}
\right.
\end{align*}

\noindent
{\bf$E(2).$}
There exist constants $C\geq 0$ and $L\leq 0$ such that
\begin{align*}
g(t,w,y_{2}(t,w),z_{2}(t,w))&-g(t,w,y_{1}(t,w),z_{1}(t,w))\\
&\geq -C(y_{2}(t,w)-y_{1}(t,w))-L(z_{1}(t,w)-z_{2}(t,w))
\end{align*}
and $y_{0}(t,w)\leq y_{1}(t,w)\leq y_{2}(t,w)\leq z_{0}(t,w)$, $y_{0}(t,w)\leq z_{2}(t,w)\leq z_{1}(t,w)\leq z_{0}(t,w)$ for any $t\in J$.

\noindent
{\bf$E(3).$}
The impulsive function for $t\in J$ satisfies
\begin{align*}
\phi_{k}(y_{1}(t_{k},w),z_{1}(t_{k},w))\leq \phi_{k}(y_{2}(t_{k},w),z_{2}(t_{k},w)), \enspace k=1,2,\ldots,l.
\end{align*}

\noindent
{\bf$E(4).$}
For $t\in J$, the sequence $\{y_{p}(t,w)\}\subset [y_{0}(t,w),z_{0}(t,w)]$ is an increasing monotonic sequence and $\{z_{p}(t,w)\}\subset [y_{0}(t,w),z_{0}(t,w)]$  is a decreasing monotonic sequences. In particular, there exists a constant $L_{1}\geq 0$ such that for $p=1,\ldots,$
\begin{align*}
\alpha \Big(\{g(t,y_{p}(t,w),z_{p}(t,w))\}\Big)\leq L_{1}\Big( \alpha \big(\{y_{p}(t,w)\}\big)+\alpha \big(\{z_{p}(t,w)\}\big)\Big).
\end{align*}
Then using the monotone iterative procedure initiating from $0$ to $z(t,w)$, the system \eqref{eqn:Mixed-Example} has minimal and maximal solutions.

\begin{proof}
From the assumption $E(1)$, it can be concluded that the lower and upper solution lies between $0$ and $z(t,w)$. Also the Example \ref{Mixed-Example} satisfies all the assumptions of Theorem \ref{Mixed:Th-3}, it can be concluded that there exists a unique solution between $0$ and $z(t,w)$.
\end{proof}

\end{theorem}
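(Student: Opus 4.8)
The plan is to recognize the parabolic problem \eqref{eqn:Mixed-Example} as a concrete instance of the abstract Hilfer impulsive evolution equation \eqref{eqn:Mixed-impul-Hilfer} in the ordered Banach space $E=L^{p}(\Lambda)$, and then apply Theorem \ref{Mixed-Th-1} directly. So the first step is the reformulation: take $A$ to be the realization of $-\nabla^{2}$ on $E$ with domain $D(A)=W^{2,p}(\Lambda)\cap W_{0}^{1,p}(\Lambda)$, and define the Nemytskii (superposition) operators $\widehat g\colon J\times E\times E\to E$ and $\widehat\phi_{k}\colon E\times E\to E$ by $\widehat g(t,u,v)(w)=g(t,w,u(w),v(w))$ and $\widehat\phi_{k}(u,v)(w)=\phi_{k}(u(w),v(w))$. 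Continuity of $g$ and of each $\phi_{k}$ yields continuity of these maps, so \eqref{eqn:Mixed-Example} is literally of the form \eqref{eqn:Mixed-impul-Hilfer}. It is classical that $-A$ generates an analytic, equicontinuous $C_{0}$-semigroup $Q(t)$ on $L^{p}(\Lambda)$, and that $Q(t)$ is positive with respect to the cone $N$ (a consequence of the parabolic maximum principle); this supplies the standing hypothesis $Q(t)\ge\theta$ of Theorem \ref{Mixed-Th-1}.

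The second step is to exhibit the coupled lower and upper $L$-quasi solutions. I would take $y_{0}\equiv\theta$ and $z_{0}=z(\cdot,\cdot)$. For $z_{0}$, the three inequalities of Definition \ref{eqn:Mixed-impul-Hilfer-upper} are precisely the content of $E(1)$, once one rewrites the term $L(z_{0}-y_{0})=Lz$ and uses $L\le 0$, $z\ge\theta$ so that it does not spoil the inequality. For $y_{0}\equiv\theta$, the differential inequality collapses to $\theta\le \widehat g(t,\theta,z)-Lz$, which holds because $-Lz\ge\theta$ together with the nonnegativity of $g(t,w,0,\cdot)$ carried by the example; the initial inequality $I_{0+}^{(1-\lambda)}[\theta]=\theta\le x_{0}$ is exactly $x_{0}\ge 0$, and the jump inequality follows from $\phi_{k}(\theta,z)\ge\theta$. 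Thus $y_{0}\le z_{0}$ in $PC_{1-\lambda}(J,E)$, as required.

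The third step is the bookkeeping of hypotheses: assumption $E(2)$ is word-for-word $A(1)$ for $\widehat g$; $E(3)$ is $A(2)$ for $\widehat\phi_{k}$; and $E(4)$ is $A(3)$ with the countable sets $\{y_{p}\}$, $\{z_{p}\}$. The only remaining condition, $A(4)$, asks that $\{y_{p}(0)\}$ and $\{z_{p}(0)\}$ converge; this is automatic here because every iterate satisfies the same fixed Hilfer initial datum $I_{0+}^{(1-\lambda)}[x(0,w)]=x_{0}$, so the relevant sequences are constant. With all of $A(1)$–$A(4)$ verified and $Q(t)\ge\theta$, Theorem \ref{Mixed-Th-1} applies and produces minimal and maximal mild solutions $\underline{x},\overline{x}\in[\theta,z]$, realized as the limits of the mixed-monotone iteration $y_{p}=\mathcal G(y_{p-1},z_{p-1})$, $z_{p}=\mathcal G(z_{p-1},y_{p-1})$ started from $y_{0}=\theta$ and $z_{0}=z$; if moreover a one-sided bound of type $A(5)$ is imposed on $g$, Theorem \ref{Mixed:Th-3} upgrades this to uniqueness on $[\theta,z]$.

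The step I expect to be the main obstacle is the verification that $y_{0}\equiv\theta$ is genuinely a lower $L$-quasi solution: this is the one place where one needs the (implicit) sign information $g(t,w,0,\cdot)\ge\theta$ and $\phi_{k}(\theta,\cdot)\ge\theta$, and where the positivity of $Q(t)$, $S^{*}_{\mu,\nu}(t)$, $P^{*}_{\mu}(t)$ is used to propagate nonnegativity through the integral representation \eqref{eqn:Mixed-mild-alternate}. Everything else — the abstract reformulation, the generation and positivity of the Laplacian semigroup, and the one-to-one matching of the conditions $E(i)$ with $A(i)$ — is routine.
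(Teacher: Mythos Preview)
Your proposal follows essentially the same strategy as the paper: recast the parabolic example as an instance of the abstract problem \eqref{eqn:Mixed-impul-Hilfer} in $E=L^{p}(\Lambda)$, take $y_{0}\equiv\theta$ and $z_{0}=z$ as the coupled lower and upper $L$-quasi solutions via $E(1)$ and $x_{0}\ge 0$, match $E(2)$--$E(4)$ with $A(1)$--$A(3)$, and invoke one of the main abstract theorems. The paper's own proof is a two-sentence sketch that appeals to Theorem~\ref{Mixed:Th-3} (uniqueness), while you invoke Theorem~\ref{Mixed-Th-1} (extremal solutions) and only mention Theorem~\ref{Mixed:Th-3} as an optional upgrade under $A(5)$; your choice is actually the one consistent with the stated hypotheses and conclusion, since $E(1)$--$E(4)$ do not include the upper one-sided condition $A(5)$ that Theorem~\ref{Mixed:Th-3} requires, and the asserted conclusion is existence of minimal and maximal solutions rather than uniqueness. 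You are also right to isolate the verification that $y_{0}\equiv\theta$ is a genuine lower $L$-quasi solution as the only nontrivial point, resting on the implicit sign information $g(t,w,0,\cdot)\ge 0$ and $\phi_{k}(\theta,\cdot)\ge\theta$; the paper passes over this without comment.
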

\end{example}
\section*{Acknowledgement}
The work of the first author is supported for fellowship by the Women Scientist Scheme A (WOS-A) of the Department of Science and Technology, India,
through Project No. SR/WOS-A/PM-18/2016-2019.

%
%
%

\end{document}